\crefname{theorem}{Theorem}{Theorems}
\crefname{thm}{Theorem}{Theorems}
\crefname{mainthm}{Theorem}{Theorems}
\crefname{lemma}{Lemma}{Lemmas}
\crefname{lem}{Lemma}{Lemmas}
\crefname{remark}{Remark}{Remarks}
\crefname{claim}{Claim}{Claims}
\crefname{subclaim}{Sub-claim}{Sub-claims}
\crefname{prop}{Proposition}{Propositions}
\crefname{proposition}{Proposition}{Propositions}
\crefname{defn}{Definition}{Definitions}
\crefname{corollary}{Corollary}{Corollaries}
\crefname{conjecture}{Conjecture}{Conjectures}
\crefname{question}{Question}{Questions}
\crefname{chapter}{Chapter}{Chapters}
\crefname{section}{Section}{Sections}
\crefname{figure}{Figure}{Figures}
\theoremstyle{plain}
\newtheorem{thm}{Theorem}
\newtheorem*{thm*}{Theorem}
\newtheorem{lemma}[thm]{Lemma}
\newtheorem{lem}[thm]{Lemma}
\newtheorem{prop}[thm]{Proposition}
\theoremstyle{definition}
\newtheorem{defn}[thm]{Definition}
\theoremstyle{remark}
\numberwithin{equation}{section}
\newcommand{\eps}{\varepsilon}
\renewcommand{\P}{{\mathbb P}}
\newcommand{\E}{{\mathbb E}}
\renewcommand{\l}{\langle}
\renewcommand{\r}{\rangle}
\newcommand{\G}{{\mathcal G}}
\newcommand{\Gnp}{\G_{n,p}}
\newcommand{\1}{{\bf 1}}
\newcommand{\cH}{{\mathcal H}}
\newcommand{\cC}{{\mathcal C}}
\newcommand{\cV}{{\mathcal V}}
\author[Z. Bartha]{Zsolt Bartha}
\address{Alfr\'{e}d R\'{e}nyi Institute of Mathematics}
\email{bartha@renyi.hu}
\author[B. Kolesnik]{Brett Kolesnik}
\address{Department of Statistics, University of Oxford}
\email{brett.kolesnik@stats.ox.ac.uk}
\keywords{bootstrap percolation, random graph, weak saturation}
\subjclass[2010]{05C80, 60K35}
\begin{document}

\title[Weakly saturated random graphs]
{Weakly saturated random graphs}

\begin{abstract}
As introduced by Bollob{\'a}s, 
a graph $G$ is weakly $H$-saturated if the complete graph $K_n$ is 
obtained by iteratively completing copies of $H$ minus an edge. 
For all graphs $H$, 
we obtain an asymptotic lower bound for 
the critical threshold $p_c$, at which point 
the Erd{\H{o}}s--R{\'e}nyi graph $\Gnp$ is likely to be weakly $H$-saturated. 
We also prove an upper bound for $p_c$, 
for all $H$ which are, in a sense, strictly balanced. 
In particular, we improve the upper bound 
by Balogh, Bollob{\'a}s and Morris 
for $H=K_r$, 
and we conjecture that this is sharp up to constants. 
\end{abstract}

\maketitle

\section{Introduction}\label{S_intro}

The concept of {\it weak saturation} in graphs was introduced 
by Bollob{\'a}s \cite{B67}. 
Given graphs $G$ and $H$, the graph $\l G\r_H$ is obtained 
by iteratively completing copies of $H$ minus an edge, starting with $G$. 
Formally, set $G_0=G$, and for $t\ge1$, construct $G_t$ by adding every edge
not in $G_{t-1}$ which if added to $G_{t-1}$ creates a new copy of $H$. 
We let $\l G\r_H=\bigcup_{t} G_t$ denote the result of this procedure. 
If $\l G\r_H$ is the complete graph on the vertex set of $G$, that is, if 
all missing edges are eventually added, 
we say that $G$ is {\it weakly $H$-saturated,} or that it {\it $H$-percolates}. 

This process 
can be viewed as a type of cellular automaton  \cite{U50,vN66}, 
of which bootstrap percolation (see, e.g., 
\cite{PRK75,CLR79,vE87,AL88,S92,H03,BBDCM12})
is a well-studied example. 
Balogh, Bollob{\'a}s and Morris \cite{BBM12} 
introduced a random process called {\it graph bootstrap percolation,}
taking $G$ above to be the  
Erd{\H{o}}s--R{\'e}nyi \cite{ER59} graph $\Gnp$. 
The critical point $p_c$, at which $\Gnp$ is likely
to $H$-percolate, is defined formally as 
\[
p_c(n, H)=\inf\{p>0:\P(\l\Gnp\r_{H}=K_n)\ge1/2\}.
\]

The purpose of this work is to obtain 
general bounds for $p_c$. Our first main result 
(see \cref{T_LBgen}) establishes a non-trivial lower
bound, which holds for {\it all} $H$. 
This follows by a general extremal result (see \cref{P_lamstar}) 
that lower bounds the number of edges in so-called ``witness graphs,'' 
which add a given edge. This extends the bound in \cite{BBM12},
proved in the case that $H=K_r$, to all $H$. 
As an application  
(see  \cref{C_Hmain}), 
we locate $p_c$ up to poly-logarithmic factors for all balanced 
(see \cref{D_bal}) graphs $H$, partially
answering Problem 1 in \cite{BBM12}. 
Our second main result (see \cref{T_Hmain})
proves a sharper upper bound for all strictly balanced $H$. 
In particular, this improves the bound 
in \cite{BBM12}
for $K_r$, when $r\ge5$.

The primary focus in \cite{BBM12} is the case that $H=K_r$
is a complete graph (although some other graphs are also analyzed, see 
\cite[Section 5]{BBM12}). 
Note that all graphs $K_2$-percolate (any missing edge is added at time $t=1$) 
so trivially $p_c(n,K_2)=0$. A graph $K_3$-percolates if and only if it is connected, 
so it follows $\P(\l\Gnp\r_{K_3}=K_n)\to \exp(-e^{-c})$ if $p=(\log n +c)/n$
by the fundamental work \cite{ER59}. 
The next threshold of interest
$p_c(n,K_4)$ is estimated in \cite{BBM12} up to constant factors, and the 
recent works
\cite{AK17,AK18,K17} show that 
$p_c(n,K_4)\sim1/\sqrt{3n\log n}$. 

For $r\ge5$, 
$p_c(n,K_r)$ is estimated in \cite{BBM12}
up to poly-logarithmic factors. 
The upper bound  for $p_c(n,K_r)$ proved in \cite{BBM12} holds
for a more general class of graphs, which we now recall.

\subsection{Balanced graphs}
For a graph $H$, we let $v_H$ and $e_H$ denote its 
number of its vertices and edges, respectively, and we put 
\[\lambda=(e_H-2)/(v_H-2).
\]
Note that $H$-percolation is trivial if $v_H\le3$ 
or if the minimum
degree $\delta_H=1$. In the latter
case,
$p_c$ essentially coincides with the threshold for 
a copy of $H$ minus an edge in $\Gnp$ (see \cite[Proposition 26]{BBM12}). That is,  
$p_c=\Theta(n^{-1/\lambda'})$, where
\[ 
\lambda'=\min_{e\in E[H]}\max_{F\subset H\setminus e}e_F/v_F,
\] 
where $E[H]$ is the edge set of $H$, and $F$ is a subgraph of $H$ not containing the edge $e$. 
We therefore assume throughout this work 
that $\delta_H\ge2$ and $v_H\ge4$. In particular, this implies $\lambda\ge1$. 
In fact, these assumptions hold for every graph satisfying the next definition.

\begin{defn}\label{D_bal}
We say that a graph $H$ is {\it balanced} if $v_H\ge 4$, and 
$(e_F-1)/(v_F-2)\le \lambda$
for all subgraphs $F\subset H$
with $3\le v_F<v_H$. 
\end{defn}

This is related to the
notion of a {\it $2$-balanced} graph $G$,  
such that 
$(e_F-1)/(v_F-2)$ is maximized (over $F\subset G$
with $v_F\ge 3$) when $F=G$. 
This concept plays a role in, e.g., \cite{BMS15,CG16,ST15,S16}, 
where the maximal number of edges in an
$H$-free subgraph (Tur\'an's problem) of $\Gnp$ 
is studied. 
Indeed, a graph $H$ is balanced as above if and only if 
$H\setminus e$ is 2-balanced, for all edges $e\in E[H]$.
It also follows that $H$ is connected. 
See \cref{A_supp1} for a proof of these basic facts.

In \cite{BBM12}, it is shown that $p_c(n,K_r)=n^{-1/\lambda+o(1)}$,
as $n\to\infty$. The upper bound holds for 
balanced graphs $H$ (see \cite[Proposition 3]{BBM12}). 
The lower bound, on the other hand,  
relies on the so-called
{\it witness set algorithm,} which assigns to each $e\in E[\l G\r_H]$ 
a {\it witness graph} $W_e\subset G$ such that $e\in E[\l W_e\r_H]$. 
This algorithm yields an Aizenman--Lebowitz \cite{AL88}
type property (see \cref{L_HAL}), a standard
tool from the theory of bootstrap percolation. 
A lower bound for $p_c$ is obtained by the first moment method in  
\cite{BBM12} using this, together with 
the fact (Lemma 9 in \cite{BBM12}) that if $H=K_r$
then a witness graph on $k$ vertices has at least
$\lambda(k-2)+1$ edges. 
The proof, however, 
is somewhat abstract and lengthy. 
The authors state that 
``the proof is delicate, 
and does not seem to extend easily to other graphs.''

In this work, we present a short and simple proof 
(see \cref{P_lamstar} below) 
that works
directly with the $H$-percolation dynamics, and naturally for {\it all} graphs $H$. 
Using this, we obtain the following result, which answers Problem 1 in \cite{BBM12}
in the case that $H$ is balanced. 

\begin{thm}\label{C_Hmain}
If $H$ is balanced (see \cref{D_bal}) then $p_c(n,H)=n^{-1/\lambda+o(1)}$. 
\end{thm}

We note that Bayraktar and Chakraborty \cite{BC19} 
recently studied the complete bipartite case. 
Extending the arguments in \cite{BBM12} for $H=K_r$ to $K_{r,s}$,
they find $p_c$
up to poly-logarithmic factors in the balanced case, partially answering 
Problem 5 in \cite{BBM12}. 
\cref{C_Hmain} contains these results in  \cite{BC19} 
as a special case.

\subsection{General lower bound}
\cref{C_Hmain} follows by the upper bound in \cite{BBM12} and a general lower bound 
for $p_c(n,H)$, that holds for all $H$ (satisfying our
baseline assumptions $v_H\ge4$ and $\delta_H\ge2$), which we now describe. 

\begin{defn}
For a graph $H$ with $v_H\ge 4$, we put  
\[
\lambda_*
=
\min\frac{e_H-e_{F}-1}{v_H-v_{F}}
\]
minimizing over all subgraphs $F\subset H$ with 
$2\le v_F<v_H$. 
\end{defn}

Note that $\lambda_*>0$ if and only if $\delta_H\ge 2$, 
and we continue to restrict our attention to graphs $H$ with this property.

It is easy to see that $\lambda_*\le\lambda$, with equality
if and only if $H$ is balanced (see \cref{L_lamstar}). 
In \cref{S_REA} below, we show that 
a witness graph $W_e$ for some $e\in E[\l G\r_H]$ 
with $k\ge v_H$ vertices has at least $\lambda_*(k-v_H)+e_H-1$ edges. 
Note that $K_r$ is balanced, and so this reduces to $\lambda(k-2)+1$
in that case, recovering Lemma 9 in \cite{BBM12}.
A general lower bound follows. 

\begin{thm}\label{T_LBgen}
For any graph $H$ with $v_H\ge 4$ and $\delta_H\ge 2$,
\[p_c(n,H)\ge\Omega(n^{-1/\lambda_*}(\log{n})^{1/\lambda_*-1}).\] 
\end{thm}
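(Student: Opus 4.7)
The strategy is to combine the witness-graph density bound of \cref{L_HREA} with the Aizenman--Lebowitz property \cref{L_HAL} and a first-moment union bound at an intermediate scale $k \asymp \log n$. Set $p = c\, n^{-1/\lambda_*}(\log n)^{1/\lambda_*-1}$ for a sufficiently small constant $c = c(H) > 0$; the goal is to show that $\P(\l\Gnp\r_H = K_n) \to 0$.

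By \cref{L_HREA}, every witness $W \subseteq \Gnp$ with $v_W = k$ vertices has $e_W \ge m_k := \lceil \lambda_*(k-v_H) + e_H - 1 \rceil$. A direct first-moment calculation, using $\binom{n}{k} \le (en/k)^k$, $\binom{\binom{k}{2}}{m_k} \le (ek^2/(2m_k))^{m_k}$, and $m_k = \lambda_* k + O(1)$, yields
\[
\binom{n}{k}\binom{\binom{k}{2}}{m_k}p^{m_k} \le \bigl[D \cdot np^{\lambda_*} \cdot k^{\lambda_*-1}\bigr]^k
\]
for some $D = D(H)$. With the above $p$, one has $np^{\lambda_*} = c^{\lambda_*}(\log n)^{1-\lambda_*}$, so the bracket equals $D c^{\lambda_*}(k/\log n)^{\lambda_*-1}$. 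Depending on the sign of $\lambda_*-1$, this expression is monotone in $k$, and one can choose $k^* = \gamma \log n$ with $\gamma = \gamma(H, c) > 0$ so that the bracket is at most $1/2$ uniformly for $k \in [k^*, e_H k^*]$ (for $\lambda_* > 1$ the binding constraint comes from the upper endpoint; for $\lambda_* < 1$ from the lower endpoint; for $\lambda_* = 1$ the bracket is $Dc$ independent of $k$, and smallness of $c$ suffices). A union bound then gives
\[
\P\bigl(\exists\,W \subseteq \Gnp \text{ with } v_W \in [k^*, e_H k^*],\ e_W \ge m_{v_W}\bigr) \le (e_H-1) k^* \cdot 2^{-k^*} = o(1).
\]

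To close the argument, if $\l\Gnp\r_H = K_n$ then by \cref{L_HAL} the maximum witness size grows through the process by factors of at most $e_H$ per step, and since the final closure spans all of $[n]$ this maximum must eventually reach order $n$; hence at some intermediate time a witness of size in the dyadic window $[k^*, e_H k^*]$ must appear. By the bound above this happens with probability $o(1)$, so $\l\Gnp\r_H \neq K_n$ w.h.p., giving the claimed lower bound on $p_c(n,H)$. The principal technical hurdle is the first-moment computation, where the $(\log n)^{1-\lambda_*}$ factor in $np^{\lambda_*}$ must be carefully tracked and the choice of $k^* = \Theta(\log n)$ calibrated uniformly across the three regimes of $\lambda_*-1$. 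The structural input \cref{L_HREA} is the indispensable new ingredient: in the unbalanced case (where $\lambda_* < \lambda$) no other source yields the sharp density threshold $\lambda_*$ on witness edges that underlies the exponent $1/\lambda_*$ in the theorem.
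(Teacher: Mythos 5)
Your density bound for witnesses in the window $[k^*,e_Hk^*]$ and your use of \cref{L_HREA}/\cref{P_lamstar} together with \cref{L_HAL} are the right ingredients, and that part of the computation matches the paper's. The gap is in how you get from "no witness of size in the window" to "no percolation." You assert that if $\l\Gnp\r_H=K_n$ then the maximum witness size must reach order $n$ and hence pass through the window. That implication is false: witness graphs are per-edge objects, each edge has its own witness, and full percolation is perfectly consistent with every witness having size $O(\log n)$ or even $O(1)$ (indeed, in the supercritical regime the paper completes most edges via ladders of height $\beta\log n$, and already at $p=\Theta(n^{-1/\lambda})$ the expected number of edges completed by a single copy of $H\setminus e$ sitting in $\Gnp$, i.e.\ witnesses of minimal size $v_H-2$, is $\Theta(n^2p)\to\infty$). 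So nothing forces any witness to exceed $k^*$, and your argument says nothing about the regime $v_H-2\le k<k^*$.

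The fix, which is how the paper argues, is to work with a \emph{fixed} edge $e$ and show $\P(e\in\l\Gnp\r_H)\to0$; this suffices since it implies $\P(\l\Gnp\r_H=K_n)\to0$. The trichotomy is then: (1) $e\in\Gnp$ (probability $p$); (2) $W_e$ has size $k\le\log n$; (3) $W_e$ has size $>\log n$, in which case \cref{L_HAL} produces \emph{some} witness in the window $(\log n,e_H\log n]$ (and here your window bound, with an extra union over the $O(n^2)$ possible base edges, does the job, which is why one needs the per-$k$ term to beat $n^{-2}$, not merely $2^{-k}$). Branch (2) is the one you are missing entirely, and it cannot be handled by a union bound over all edges (the expectation diverges, as noted above). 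Anchoring the count at the two endpoints of $e$ yields an expected number at most $p\sum_k kO(np^{\lambda_*}k^{\lambda_*-1})^k$: the crucial leading factor of $p$, coming from the "$+1$" in the edge count $\lambda_*k+1$ of \cref{P_lamstar} once the two base vertices are fixed, is what makes this branch $O(p\cdot\mathrm{polylog}(n))\to0$ even though the bracket itself is only polylogarithmically bounded. Without restructuring around a fixed edge you lose this factor and the argument does not close.
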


Note that, in the case that $H=K_4$,  this lower bound  
includes the correct poly-logarithmic factor (recall that $p_c(n,K_4)\sim1/\sqrt{3n\log n}$, 
as 
discussed above), since $K_4$ is balanced and 
so $\lambda_*=\lambda=2$. 
In \cite{BBM12}, the ``double-dumbbell' $H=DD_r$ (two copies of 
$K_r$, $r\ge4$, joined by a pair of disjoint edges) 
is given as an example of an (unbalanced) 
graph for which $p_c=n^{-1/\gamma+o(1)}$, with 
$\gamma\in(\lambda',\lambda)$ (recall $\lambda'$ defined above \cref{D_bal}). 
We note that, in this instance, $\gamma=[{r\choose2}+1]/r=\lambda_*$.

On the other hand, 
Bidgoli, Mohammadian and Tayfeh-Rezaie \cite{BMTR21} 
have studied the cases $H=K_{2,t}$. 
For these unbalanced graphs, even finding 
the correct power $\gamma$ in $p_c=n^{-1/\gamma+o(1)}$ remains open, 
except in the specific case $H=K_{2,4}$ where 
$p_c=\Theta(n^{-10/13})$. However, note that $\lambda_*=1$ for this graph.

Towards a full solution to Problem 1 in \cite{BBM12}, 
it would be interesting to determine the class of graphs for which 
$p_c=n^{-1/\lambda_*+o(1)}$. 
We note that \cref{C_Hmain} shows that this class includes all balanced graphs. 

\subsection{Strictly balanced graphs}

Finally, we turn our attention to the specific class of balanced graphs, which includes
the cases $H=K_r$, $r\ge5$, of natural interest. 

\begin{defn}\label{D_SB}
We call $H$ {\it strictly balanced}
if the inequality $(e_F-1)/(v_F-2)< \lambda$
is strict 
in \cref{D_bal}. 
\end{defn}

Note that $\lambda>1$ for strictly balanced graphs. 
Indeed, recall that we assume that $\delta_H\ge2$, and 
consider any 
$F\subset H$ with three vertices and at least
two edges. 

For this class of graphs, we prove a sharper upper bound.

\begin{thm}\label{T_Hmain}
If $H$ is strictly balanced then 
$p_c(n,H)\le O(n^{-1/\lambda})$. 
\end{thm}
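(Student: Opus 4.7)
The plan is to combine the upper bound from \cref{L_UBstrict} with a refined lower bound that removes the $(\log n)^{1/\lambda-1}$ factor in \cref{T_LBgen}. For the upper bound, I would invoke \cref{L_UBstrict} directly: via two applications of the second moment method, it yields $p_c(n,H) \leq C n^{-1/\lambda}$ for some constant $C=C(H)$.

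For the lower bound, direct application of \cref{T_LBgen} gives only $p_c \geq \Omega(n^{-1/\lambda}(\log n)^{1/\lambda - 1})$, since $\lambda_* = \lambda$ when $H$ is balanced (by \cref{L_lamstar}); this is weaker than $\Omega(n^{-1/\lambda})$ whenever $\lambda>1$. The log factor originates in the Aizenman--Lebowitz scale $k = \Theta(\log n)$, where one bounds the number of candidate witness subgraphs on $k$ vertices with $m_0 = \lambda(k - v_H) + e_H - 1$ edges using the crude estimate $\binom{\binom{k}{2}}{m_0}$; this contributes a super-exponential factor of order $k^{\Theta(k)}$ and forces $p$ to be scaled down by $(\log n)^{1/\lambda-1}$ to keep the first moment small. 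I would remove the log factor by exploiting strict balance to sharply reduce this combinatorial count. Concretely, by the witness-graph construction of \cref{L_HREA}, every witness is built inductively by gluing copies of $H$ onto an existing subgraph. Under strict balance, every proper subgraph $F \subsetneq H$ with $v_F \geq 3$ satisfies $(e_F - 1)/(v_F - 2) < \lambda$, and a short computation shows that gluing a new copy of $H$ along any such $F$ forces at least one extra edge beyond $m_0$ in the enlarged witness. Hence the only minimality-preserving attachment patterns are the ``canonical'' chain-like ones in which each new copy shares a small fixed subgraph with its predecessor, and the number of such minimal labeled witnesses of size $k$ is only $\exp(O(k))$, rather than $k^{\Theta(k)}$.

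Inserting this refined count into the first moment gives at most $(C_1 c^\lambda)^k$ minimal witnesses of size $k$ in expectation, uniformly in $k$, where $p = c n^{-1/\lambda}$. Applying \cref{L_HAL} at the scale $k = \log n$, the percolation probability is bounded by $(C_1 c^\lambda)^{\log n} \to 0$ for any $c < C_1^{-1/\lambda}$. This yields $p_c \geq c_0 n^{-1/\lambda}$, and combining with the upper bound above gives $p_c = \Theta(n^{-1/\lambda})$ as desired.

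The hardest step I anticipate is making the rigidity of minimal witnesses precise and converting it into the claimed exponential count. This requires inductively revisiting the proof of \cref{L_HREA} and tracking how many extra edges (above the $m_0$ floor) each glue step contributes, leveraging strict balance to guarantee a strictly positive contribution from every non-canonical glue. Once this is in hand, the first-moment computation and the application of \cref{L_HAL} are a direct refinement of the argument for \cref{T_LBgen}.
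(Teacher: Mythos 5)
Your overall strategy for the lower bound --- use strict balance to show that non-chain-like gluings in the witness-graph construction cost extra edges, thereby replacing the crude count of order $k^{\Theta(\lambda k)}$ of candidate witness graphs on $k+2$ labelled vertices by an $e^{O(k)}k!$ count, which removes the logarithmic correction --- is exactly the paper's strategy (\cref{L_M}, \cref{L_HWkell}). But as written your argument has a genuine gap: you only count, and rule out, \emph{edge-minimal} witnesses, i.e.\ those with exactly $\lambda(k-2)+1$ edges. The witness set algorithm does not produce minimal witnesses in general: $W_e$ can have $\lambda(k-2)+1+\ell$ edges for any $\ell\ge1$, and the event $e\in\l\Gnp\r_H$ is not excluded by showing that no minimal witness appears in $\Gnp$ (nor does a non-minimal witness contain a minimal one, since deleting edges from a witness graph need not leave a witness graph). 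The union bound must therefore run over all $\ell$. The paper does this via \cref{L_HWkell}: the number of witnesses of size $k$ with $\ell$ excess edges is at most $\gamma^{\ell k}k!$, each excess edge costs a factor $p$, and the geometric series $\sum_\ell(\gamma^kp)^\ell$ converges only for $k\le\beta\log n$ with $\beta<1/(\lambda e_H\log\gamma)$; moreover one needs a separate a priori first-moment argument that whp $\Gnp$ has no subgraph on $O(\log n)$ vertices with at least $(\lambda+1)(k+2)$ edges, which caps $\ell$ at $O(k)$ so that \cref{L_HWkell} applies. None of this trade-off between the growth $\gamma^{\ell k}$ of the number of non-minimal witnesses and the decay $p^\ell$ appears in your proposal, and it is not cosmetic.

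A second, smaller inaccuracy: your claim that gluing a new copy of $H$ along a proper subgraph $F$ with $v_F\ge3$ forces ``at least one extra edge'' is too strong. Strict balance only guarantees an excess of $\xi=\min_F\frac{e_H-e_F-1}{v_H-v_F}-\lambda>0$ per such step, and $\xi$ may be a fraction less than $1$; hence a witness with $\ell$ excess edges may involve up to $\ell/\min\{1,\xi\}$ non-canonical gluings, which is precisely why the count of such witnesses grows like $e^{O(\ell k)}$ rather than $e^{O(\ell)}e^{O(k)}$. For the upper bound you simply defer to the second-moment argument of \cref{L_UBstrict}; that matches the paper's route, but it is the more technical half (induced $H$-ladders and \cref{L_Hladder,L_gamma,L_2ndMM}), and your proposal contributes nothing new there.
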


Note that $K_4$ is balanced, but not strictly. For all $r\ge5$, however, 
the graphs $K_r$ are strictly balanced. 
It is somewhat tempting to suspect that $p_c(n,H)= \Theta(n^{-1/\lambda})$ 
for all strictly balanced graphs,
but we would not go so far as to make that conjecture.

\subsection{Outline}
The general lower bound in \cref{T_LBgen} is proved in \cref{L_LBgen},
using \cref{P_lamstar} below, which lower bounds the number of edges in a 
witness graph. This result generalizes a result about cliques $K_r$, proved 
by a different strategy in \cite[Lemma 9]{BBM12}, 
to all graphs $H$.

Finally,  in \cref{L_UBstrict}, we prove 
the upper bound in \cref{T_Hmain} for strictly balanced graphs.
This is the most technical part of the article, where 
two rounds of the second moment method are required.
First we bound the probability that a given edge is added by 
a specific type of witness graph, called an ``$H$-ladder,''
with an appropriately chosen height. Then we show that these
events are roughly independent enough to ensure that a large proportion of 
all edges are added in this way, and then full percolation follows  
by sprinkling. 
We note that $H$-ladder graphs were introduced in 
\cite[Section 2]{BBM12}.
A finer analysis of the ways in which pairs of such graphs can
overlap is key to the sharper upper bound in \cref{T_Hmain}.

\subsection{Notation}

For a graph $G=(V,E)$, we denote its vertex set by $V[G]=V$ and its edge set by $E[G]=E$, 
with sizes $v_G=|V[G]|$ and $e_G=|E[G]|$. We write $F\subset G$ for a 
(not necessarily induced) subgraph $F$ of $G$.
We denote by $G\setminus e$ the graph obtained from $G$ by keeping its vertex set, 
and deleting the edge $e$ from its edge set. 
We similarly use $G\setminus\{e_1,\ldots,e_k\}$ when the edges $e_1,\ldots,e_k$ are removed. 
For graphs $G_1, G_2,\ldots,G_k$, 
we denote by $\bigcup_{i=1}^k G_i$ the graph with vertex set $\bigcup_{i=1}^k V[G_i]$ 
and edge set $\bigcup_{i=1}^k E[G_i]$, whereas $\bigcap_{i=1}^k G_i$ 
is the graph with vertex set $\bigcap_{i=1}^k V[G_i]$ and edge set $\bigcap_{i=1}^k E[G_i]$.
For an edge $e=\{x,y\}$, for ease of notation, we often simply write
$e$ to denote the graph $G$
with $V[G]=\{x,y\}$ and $E[G]=\{e\}$.

Throughout the paper we use the notation $f=O(g)$ or $f\le O(g)$ 
for functions $f$ and $g$ if $f\le cg$ for some universal constant $c$ not depending on any 
of the arguments of $f$ and $g$. Similarly, we use $f=\Omega(g)$ or $f\ge \Omega(g)$ for the 
opposite inequality, and $f=\Theta(g)$ when both of these statements hold.

\subsection{Acknowledgements}
We are indebted to Yuval Peled for noticing 
an issue in a previous version of this work
that had claimed a lower bound (matching the upper bound in 
\cref{T_Hmain}) for all strictly balanced graphs. 

\section{A general lower bound}\label{L_LBgen}

In this section, we obtain a lower bound for $p_c$
that holds for all graphs $H$ (with $\delta_H\ge2$ and $v_H\ge4$). 
We first recall, in \cref{S_WSA,S_REA_BBM}, the results 
from \cite{BBM12} which we use. 
Then, in \cref{S_REA}, we prove \cref{T_LBgen}. 

\subsection{Witness set algorithm (WSA)}\label{S_WSA}
In \cite{BBM12} Section 3.1, 
the {\it witness set algorithm (WSA)}
is introduced, which 
assigns a {\it witness graph} $W_e\subset G$ to each $e\in E[\l G\r_H]$
such that $e\in E[\l W_e\r_H]$.
These graphs are defined in time with the percolation dynamics in the following way. 
Let $E_t$ denote the set of edges $E[G_t]\setminus E[G_{t-1}]$ 
added at time $t$.
For any edge $e\in E[G]$, $W_e$ consists of the single edge $e$ with its two endpoints as vertices.
Then, at step $t\ge1$, the WSA defines simultaneously for each $e\in E_t$ the witness graph
$W_e=\bigcup_{f\in E[H_e\setminus e]} W_f$, where 
$H_e$ is a copy (chosen arbitrarily if not unique)  
of $H$ completed by the addition of $e$ at time $t$. 
Since $H_e\setminus e\subset G_{t-1}$, this procedure
is well-defined. 

\begin{defn}
For a witness graph $W_e$, we define $v_{W_e}-2$, the number of vertices
in $W_e$ besides the endpoints of $e$, to be its {\it size}.
\end{defn}

We note that the ``size'' of a graph sometimes refers to its number of edges.
We will not follow this 
convention.

A key property of this construction is the 
following Aizenman--Lebowitz \cite{AL88} type property 
(cf.\ Lemma 13 in \cite{BBM12}), as is easily observed. 

\begin{lem}\label{L_HAL}
Suppose that  $W_e$ for some $e\in E[\l G\r_H]$
is of size at least $k$ for some $k\ge v_H-2$. Then, for some 
$k'\in[k,e_Hk]$, there is an $f\in E[\l G\r_H]$
so that $W_f$ is of size $k'$. 
\end{lem}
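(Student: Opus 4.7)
The plan is a short extremal argument. Let $s(f):=v_{W_f}-2$ denote the size of a witness graph. Among all edges in $\langle G\rangle_H$ with $s(f)\ge k$ (nonempty by hypothesis), I would select one, call it $e$, that minimizes $s(e)$. The target is to show $s(e)\le e_H k$, so that $k':=s(e)$ does the job.

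To verify this, I would argue by contradiction and suppose $s(e)>e_Hk\ge 1$. Since original edges of $G$ have witness graphs of size $0$, the edge $e$ must have been added at some step $t\ge 1$ via a copy $H_e$ of $H$, and by construction
\[
W_e \;=\; \bigcup_{f\in H_e\setminus e} W_f,
\]
a union of exactly $e_H-1$ witness graphs. Minimality of $s(e)$ forces $s(f)\le k-1$ for every such $f$. The key observation is that every $f\in H_e\setminus e$ has both endpoints inside $V(H_e)$, so $W_f$ contributes at most $s(f)$ vertices outside $V(H_e)$. Taking the union gives
\[
|V(W_e)|\;\le\; v_H + \sum_{f\in H_e\setminus e} s(f) \;\le\; v_H + (e_H-1)(k-1),
\]
and hence $s(e)\le (v_H-2)+(e_H-1)(k-1)$. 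A one-line rearrangement shows this is at most $e_Hk$ iff $k+e_H-v_H+1\ge 0$, which holds because $k\ge 1$ and the standing assumption $\delta_H\ge 2$ forces $e_H\ge v_H$. This contradicts $s(e)>e_Hk$.

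I do not expect any serious obstacle: the whole argument reduces to a single unfolding of the recursion defining $W_e$ together with the elementary vertex count above. The only mild point to check is that $V(H_e)\subseteq V(W_e)$ in the estimate — equivalently, that no vertex of $H_e$ is incident only to $e$ — which is immediate from $\delta_H\ge 2$, since then every vertex of $H_e$ still meets some edge of $H_e\setminus e$.
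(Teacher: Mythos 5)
Your overall strategy---an extremal choice of $e$, one unfolding of $W_e=\bigcup_{f\in H_e\setminus e}W_f$, and the vertex count $|V(W_e)|\le v_H+\sum_f s(f)$---is sound and is essentially the paper's Aizenman--Lebowitz argument: the paper tracks $M_t$, the maximal size of a witness graph at time $t$, and shows $M_{t+1}\le v_H-2+(e_H-1)M_t\le e_H M_t$, which is exactly your count. Your closing arithmetic (using $e_H\ge v_H$, which follows from $\delta_H\ge2$) is also correct.

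The one genuine gap is the sentence ``Minimality of $s(e)$ forces $s(f)\le k-1$ for every such $f$.'' It does not. For $f\in H_e\setminus e$ you have $W_f\subseteq W_e$, hence $s(f)\le s(e)$; combined with minimality of $s(e)$ among edges of size at least $k$, an edge $f$ with $s(f)\ge k$ must satisfy $s(f)=s(e)$---and this is perfectly possible (for instance, one $W_f$ may already contain every vertex of $W_e$, the other edges of $H_e$ contributing nothing new, so that $e$ and $f$ are both minimizers). In that case your displayed bound contains the term $s(f)=s(e)>e_Hk$ and yields no contradiction. The fix is easy and essentially recovers the paper's proof: choose $e$ to be an edge with $s(e)\ge k$ that was \emph{added earliest} (or break ties among your size-minimizers by addition time). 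Since every $f\in H_e\setminus e$ lies in $G_{t-1}$, each such $f$ then genuinely has $s(f)\le k-1$, and your vertex count gives $k\le s(e)\le v_H-2+(e_H-1)(k-1)\le e_Hk$ directly, with no need for a contradiction argument. A minor aside: your check that $V(H_e)\subseteq V(W_e)$ is not actually needed for the upper bound, since $V(W_e)\subseteq V(H_e)\cup\bigcup_f\bigl(V(W_f)\setminus V(H_e)\bigr)$ holds in any case.
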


\begin{proof}
Let $M_t$
be the maximal size of a witness graph $W_f$, for $f\in E[G_t]$. 
Note that $M_0=0$ and $M_1=v_H-2$ (assuming $E_1\neq\emptyset$). 
Then for any $e\in E_{t+1}$, $t\ge1$,  
$W_e$ is of size at most
$v_H-2+(e_H-1)M_t\le e_H M_t$, since
$M_t\ge M_1 =v_H-2$. 
Therefore $M_{t+1}\le e_H M_t$. Hence, if $M_t\ge k$ for some $t$, then 
$M_s\in[k,e_Hk]$ for some $s\le t$. 
\end{proof}

\subsection{Red edge algorithm (REA)}\label{S_REA_BBM}

In \cite{BBM12} a lower bound for $p_c$, in the special case of $H=K_r$, 
is obtained using \cref{L_HAL} together with a lower bound for the number
of edges in a witness graph. Specifically, it is shown that if $W_e$
is of size $k$, then it has at least $\lambda k+1$ edges. 
A key tool in this regard is the  
{\it red edge algorithm (REA),}
which is based on WSA (see \cref{S_WSA} above).  
Informally, for a given edge $e\in E[\l G\r_H]\setminus E[G]$
(not in $G$ but eventually added by the $H$-percolation
dynamics), REA describes the construction of the witness graph 
$W_e$ one step at a time 
by running WSA, but ignoring steps that do not contribute
to the construction of $W_e$. All involved edges which are not in $G$
are colored red. 

REA is discussed in Section 3.1 of \cite{BBM12}. 
For completeness, we briefly describe the construction here.
First, we ``slow down'' the 
$H$-percolation dynamics, so that in each step a single new edge $e'$
is added. Recall (see \cref{S_WSA}) that  
$H_{e'}$ is the copy of $H$ which $e'$ completes (that is, $W_{e'}=\bigcup_{f\in E[H_{e'}\setminus e']} W_f$).  
Let 
$e_1,\ldots, e_m=e$ be the edges for which $W_{e_j}\subset W_e$ for $j\in\{1,\ldots,m\}$
that are added (in that order by the ``slowed down'' dynamics) 
until finally $e$ is added. Color all 
 $e_1,\ldots, e_m$ red. 
Then 
\begin{equation}\label{E_redE}
W_e=(H_1\cup\ldots\cup H_m)\setminus \{e_1,\ldots,e_m\}, 
\end{equation}
where $H_j=H_{e_j}$.  
Hence, REA has $m$ steps. In the $j$th step, a copy $H_j$ of $H$ is added
and one of its edges $e_j$ is colored red. Note that $e_j\notin \bigcup_{i<j} E[H_i]$, 
however, some of the other edges in $H_j$ may already 
be in 
$\bigcup_{i<j} E[H_i]$.

\subsection{Proof of the lower bound}\label{S_REA}

In this section, we give a proof of \cref{T_LBgen} that is based on the 
Aizenman--Lebowitz type property of \cref{L_HAL}, 
and the following lower bound on the number of edges of witness graphs, 
which we will prove later (as a special case of 
\cref{L_HREA} below). Recall the definition of $\lambda_*$ given in \cref{S_intro}.

\begin{prop}\label{P_lamstar}
If $W_e$ is a witness graph for an edge  $e\in E[\l G\r_H]$ on $k\ge v_H$
vertices, then 
$W_e$ has at least $\lambda_*(k-v_H)+e_H-1$ edges. 
\end{prop}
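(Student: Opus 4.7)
The plan is to establish the bound by reordering the copies $H_1,\dots,H_m$ appearing in the REA for $W_e$ via a pre-order traversal of the associated recursion tree, and then summing a single $\lambda_*$-type inequality at each step. Let the reordered sequence be $\tilde H_1,\dots,\tilde H_m$ with $\tilde H_1=H_e$, and write $f_j$ for the red edge of $\tilde H_j$. Every non-root $\tilde H_j$ has a parent $\tilde H_{p_j}$ with $p_j<j$, and since $f_j$ appears as a non-red edge of $\tilde H_{p_j}$, both endpoints of $f_j$ already lie in $V\bigl(\bigcup_{i<j}\tilde H_i\bigr)$.  Consequently, the overlap $F_j=\tilde H_j\cap\bigcup_{i<j}\tilde H_i$ has $v_{F_j}\ge 2$.

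Setting $a_j:=v_{F_j}$ and $b_j:=e_{F_j}$, a telescoping count for $U_m=\bigcup_j\tilde H_j$ gives
\[
|V(U_m)|=v_H+\sum_{j\ge 2}(v_H-a_j),\qquad|E(U_m)|=e_H+\sum_{j\ge 2}(e_H-b_j).
\]
Since the $m$ red edges are distinct and all lie in $U_m$, we have $|E(W_e)|=|E(U_m)|-m$; together with $|V(W_e)|\le|V(U_m)|$ and $\lambda_*\ge 0$, it suffices to prove the stronger bound $|E(U_m)|-m\ge\lambda_*(|V(U_m)|-v_H)+e_H-1$.  After rearrangement, this reduces to the termwise inequality
\[
e_H-b_j-1\ge\lambda_*(v_H-a_j)\qquad\text{for every }j\ge 2.
\]
When $2\le a_j<v_H$, this is precisely the defining inequality of $\lambda_*$ applied to the proper subgraph $F_j\subset H$; the degenerate case $a_j=v_H$ reduces to verifying $b_j\le e_H-1$.

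The main obstacle I anticipate is the degenerate case $a_j=v_H$ with $b_j=e_H$ --- a \emph{redundant} copy $\tilde H_j$ entirely contained in the previous union --- which would break the key termwise inequality by one.  I plan to rule this out by choosing the WSA so that every completing copy $H_e$ is non-redundant, which I expect to be possible without loss of generality.  A secondary bookkeeping point is that the red edges really are pairwise distinct, which is immediate from the definition of the REA (each $f_j$ is the edge added at a distinct chronological step).
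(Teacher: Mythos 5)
Your reduction to the termwise inequality $e_H-b_j-1\ge\lambda_*(v_H-a_j)$ is clean, but the degenerate case you flag is a genuine failure of the approach, not a removable technicality. Take $H=K_4$, target edge $e=(8,9)$, and let $G$ consist of the edges $81,82,93,94,13,14,23,24,34$ on $\{1,2,3,4,8,9\}$. Then $91$, $92$, $12$ are completed at time $1$ by the (unique) copies of $K_4$ on $\{9,1,3,4\}$, $\{9,2,3,4\}$, $\{1,2,3,4\}$, and $89$ is completed at time $2$ by the copy on $\{8,9,1,2\}$. The recursion tree is the root $H_{89}$ with three children $H_{91},H_{92},H_{12}$, and one checks that each of these three copies has all six of its edges contained in the union of the other two together with the root (e.g.\ $E(H_{12})$: $12\in H_{89}$, $13,14,34\in H_{91}$, $23,24\in H_{92}$). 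Hence whichever child your pre-order visits last is a copy $\tilde H_j$ with $a_j=v_H$ and $b_j=e_H$, so $e_H-b_j-1=-1<0=\lambda_*(v_H-a_j)$. The global bound still holds (with equality: $W_e$ has $9=\lambda_*(6-4)+e_H-1$ edges), but only because the deficit is paid for by strict slack at an earlier step; a purely termwise argument cannot see this. Your proposed fix does not apply: here every completing copy is forced, so there is no freedom in the WSA, and the redundancy is created by the reordering itself --- no ordering of the three children avoids it.

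This is precisely the difficulty the paper's argument is organized around. It works in the chronological order (so each copy's red edge is genuinely new, giving $b_j\le e_H-1$ for free) and runs the induction separately on the edge-disjoint components of the hypergraph $\cH_j$, with the hypothesis $\lambda_*(v_C-v_H)+e_H-1$ carried per component; a copy wholly contained in one existing component (Case~1 of \cref{L_HREA}) then adds no vertices and no non-red edges and costs nothing. Note that a single global telescoping in chronological order also fails, at steps where a copy meets the previous union in at most one vertex: then $F_j$ lies outside the range of the minimization defining $\lambda_*$, and $e_H-1\ge\lambda_* v_H$ is false already for $K_4$. So to complete your argument you would need to show that the \emph{total} slack $\sum_{j\ge2}\bigl[(e_H-b_j-1)-\lambda_*(v_H-a_j)\bigr]$ is nonnegative, which is essentially the content of \cref{L_HREA}; some component-style bookkeeping of where the slack is generated appears unavoidable.
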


The next lemma shows, in particular, that 
\[
\lambda_*(k-v_H)+e_H-1\ge \lambda_*(k-2)+1
\]
with equality if $H$ is balanced. 

\begin{lem}\label{L_lamstar}
We have $\lambda_*\le \lambda$, with equality if and only
if $H$ is balanced. 
\end{lem}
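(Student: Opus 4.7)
The plan is to prove both parts by elementary algebraic manipulation, using the identity $e_H=\lambda(v_H-2)+2$ that follows from the definition of $\lambda$.

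For the inequality $\lambda_*\le\lambda$, I would simply exhibit a single subgraph $F\subset H$ that realizes $\lambda$ as the value of $(e_H-e_F-1)/(v_H-v_F)$. The natural choice is to take $F$ to be a single edge of $H$, so $v_F=2$ and $e_F=1$; then the ratio is exactly $(e_H-2)/(v_H-2)=\lambda$. (Any edge works since $\delta_H\ge 2$ guarantees $H$ has edges.) Since $\lambda_*$ is a minimum over a family containing this $F$, we get $\lambda_*\le\lambda$.

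For the characterization of equality, I would rewrite the defining inequality $(e_H-e_F-1)/(v_H-v_F)\ge\lambda$ by multiplying through by $v_H-v_F>0$ and substituting $e_H=\lambda(v_H-2)+2$. A brief algebraic rearrangement reduces it to $\lambda(v_F-2)\ge e_F-1$. For $v_F\ge 3$, this is precisely the balanced condition $(e_F-1)/(v_F-2)\le\lambda$ from \cref{D_bal}. For $v_F=2$, the rearranged inequality becomes $e_F\le 1$, which always holds. Hence the reverse inequality $\lambda_*\ge\lambda$ holds exactly when $H$ is balanced, and combining with the first step yields the claimed equivalence.

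The only potential subtlety is ensuring no degenerate cases are overlooked: one should verify that the $v_F=2$ case of the minimum does not accidentally drop below $\lambda$ (it does not, since the maximum number of edges on two vertices is $1$), and that the reduction is truly an equivalence in both directions (both implications follow from the same chain of rearrangements). Neither step presents a real obstacle; the proof should fit in a few lines.
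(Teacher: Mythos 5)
Your proposal is correct and follows essentially the same route as the paper: the single-edge subgraph $F$ gives $\lambda_*\le\lambda$, and substituting $e_H=\lambda(v_H-2)+2$ reduces the condition $(e_H-e_F-1)/(v_H-v_F)\ge\lambda$ to $\lambda(v_F-2)\ge e_F-1$, which for $v_F\ge3$ is exactly the balancedness inequality and for $v_F=2$ is automatic. (Both your argument and the paper's tacitly use that spanning proper subgraphs, $v_F=v_H$, satisfy the balancedness inequality trivially since $e_F\le e_H-1$.)
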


\begin{proof}
The case when $F$ consists of a single edge $e\in E[H]$, 
with its two endpoints as vertices, shows that $\lambda_*\le \lambda$. 
To see the second claim, note that, for $F\subset H$ with $3\le v_F<v_H$, 
\[
\lambda = \frac{e_H-\lambda(v_F-2)-2}{v_H-v_F}\le \frac{e_H-e_F-1}{v_H-v_F}
\]
if and only if $\lambda(v_F-2)\ge e_F-1$.
\end{proof}

Since $K_r$ is balanced,  and so $\lambda_*=\lambda$ in this case, 
we obtain Lemma 9 in \cite{BBM12} as a special case
of \cref{P_lamstar}.
We note here that, as per \cite{YuvalPeled}, this result, in the 
special case of $K_r$, 
can also be alternatively obtained using \cite{Kal84}. 

We note here that once \cref{L_HAL,P_lamstar} have been established, 
the proof of our 
general lower bound  for $p_c$
follows straightforwardly, along the same lines as the proof of Proposition 8 in Section 3.2 of \cite{BBM12}. 

\begin{proof}[Proof of \cref{T_LBgen}]
Fix $e\in E[K_n]$. 
Let $p=\alpha n^{-1/\lambda_*}(\log{n})^{1/\lambda_*-1}$. 
We show that, for $\alpha>0$ sufficiently small, 
$e\notin E[\l\Gnp\r_H]$ with high probability.

If $e\in E[\l\Gnp\r_H]$, then 
by \cref{L_HAL}
either (1) $e\in E[\Gnp]$, (2) 
$W_e$ is of size $k\in[v_H-2,\log n]$, or else,  
(3) some $W_f$ is of size 
$k'\in(\log{n},e_H\log n]$. 
By \cref{P_lamstar}, and the remark after it,  
a witness graph of size $k$ has at least $\lambda_*k+1$
edges. 
There are at most
$k[O(k^{\lambda_*})]^k$  graphs with exactly $\lambda_*k+1$ 
edges on a given set of $k+2$ vertices (using ${m\choose \ell}\le (m e/\ell)^\ell$). 
Therefore, taking a union bound, 
\begin{multline*}
\P(e\in E[\l\Gnp\r_H])\\
\le p
+p\sum_{k=v_H-2}^{\log n} k[O(np^{\lambda_*} k^{\lambda_*-1})]^k
+n^2p\sum_{k'=\log n}^{e_H\log n} k'[O(np^{\lambda_*} (k')^{\lambda_*-1})]^{k'}\\
\le p[O(\log n)]^2(1+n^{2+O(\log(\alpha))})\ll1
\end{multline*}
for $\alpha$ sufficiently small. 
\end{proof}

We turn now to the proof of \cref{P_lamstar}. 
To this end, it is useful to define an increasing 
sequence 
$\cH_1\subset \cdots\subset\cH_m$ of 
auxiliary 
graphs
associated with REA.
(We note that something similar appears in 
\cite{BBM12}, however, using hyper-graphs. 
For our purposes, it suffices to use
graphs.) 
Recall (see \cref{S_REA_BBM}) that in the $j$th step of REA
a copy $H_j$ of $H$ is added and one of its new
(not in $\bigcup_{i<j} H_i$) edges $e_j$
is colored red. 
Let $\cH_0$ be the empty graph. Then the graph $\cH_j$ is obtained from 
$\cH_{j-1}$
by adding a new vertex
$v_j$, which we associate with $H_j$. 
For each edge $e'$ in $E[H_j]\cap E[\bigcup_{i<j} H_i]$ we 
add an edge from $v_j$ to $v_k$, where $k=\max\{i<j:e'\in H_i\}$. 
Finally, delete any redundant edges, to ensure that $\cH_j$ is a simple graph. 
Note that $v_k$ is associated with the most recently 
(before step $j$)
added copy of $H$ containing $e'$. 
Finally, we put $\cH_e=\cH_m$. 

\begin{defn}\label{D_COMP}
For each connected component
$\cC$ of $\cH_j$, we refer to 
$C=\bigcup_{v_i\in V[\cC]} H_i$
as the corresponding {\it component} in 
$\bigcup_{i=1}^j H_i$. 
\end{defn}

Components of 
$\bigcup_{i=1}^j H_i$ can share vertices but not edges. 
Indeed, when $H_j$ is added in step $j$ of REA, 
all components in $\bigcup_{i=1}^{j-1} H_i$ 
with at least one edge in $H_j$ are merged with $H_j$ to obtain a component of 
$\bigcup_{i=1}^{j} H_i$.  
Also note that  
$\bigcup_{i=1}^m H_i$ has only one component (that is, 
$\cH_e$ is connected). 
To see this, simply recall that  
$W_e=\bigcup_{f\in E[H_e\setminus e]} W_f$, and so note that each   
$W_f$ has an edge $f\in H_e$, by the construction of $W_e$. 
Therefore we obtain \cref{P_lamstar} by the next result
(which also implies Lemma~10 in \cite{BBM12} as a special case, 
once the terminology there is unpacked). 

\begin{lemma} \label{L_HREA}
Let $W_e$ be a witness graph for an edge  
$e\in E[\l G\r_H]\setminus E[G]$ on $k\ge v_H$
vertices. 
Then, after any $j\ge 1$ number of steps of the corresponding instance of REA, 
any component $C$ of $\bigcup_{i=1}^j H_i$   
has at least $\lambda_*(v_C-v_H)+e_H-1$ non-red edges. 
\end{lemma}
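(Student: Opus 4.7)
The plan is to induct on the number of steps of the REA. The base case (zero steps) is vacuous, and any component of $\cH_{j-1}$ that $H_j$ does not touch retains its bound at step $j$, so the task reduces to verifying the claim for the new component $C'$ of $\cH_j$ containing $H_j$. If $H_j$ shares no edges with previously added copies, then $C'=H_j$ has $v_{C'}=v_H$ and exactly $e_H-1$ non-red edges, matching the bound. Otherwise $H_j$ merges $s\ge 1$ old components $C_1,\ldots,C_s$, and each $C_i$ contributes $F_i:=H_j\cap C_i$, a subgraph of $H_j\cong H$ with at least one edge, so $2\le v_{F_i}\le v_H$. Write $N(C)$ for the number of non-red edges in $C$.

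Set $v_\sqcup:=|\bigcup_i V(C_i)|$ and $F:=\bigcup_i F_i$. Edge-disjointness of the $C_i$ gives $e_F=\sum_i e_{F_i}$ and $v_F\le\sum_i v_{F_i}$. Since a vertex of $H_j$ lies in $F_i$ iff it lies in $C_i$, any vertex of $H_j$ shared by $k$ of the $C_i$ contributes equally, namely $k-1$, to $\sum_i v_{C_i}-v_\sqcup$ and to $\sum_i v_{F_i}-v_F$, so $\sum_i v_{C_i}-v_\sqcup \ge \sum_i v_{F_i}-v_F$. The remaining ingredients are $v_{C'}=v_\sqcup+v_H-v_F$ and $N(C')=\sum_i N(C_i)+(e_H-1-e_F)$, the latter because the red edge $e_j$ lies among the $e_H-e_F$ edges of $H_j$ absent from $\bigcup_i C_i$. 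Plugging the inductive bound into each $N(C_i)$, applying the defining inequality of $\lambda_*$ separately to each $F_i$ — namely $\lambda_*(v_H-v_{F_i})\le e_H-e_{F_i}-1$, which holds by definition when $v_{F_i}<v_H$ and trivially when $v_{F_i}=v_H$ since then $e_{F_i}\le e_H-1$ — summing over $i$, and invoking the double-counting bound above, yields $N(C')\ge\lambda_*(v_{C'}-v_H)+e_H-1$.

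The main subtlety is the case $s\ge 2$. The naive route of applying $\lambda_*$ to the combined subgraph $F$ once leaves a residual $(s-1)\phi_0$ with $\phi_0:=e_H-1-\lambda_* v_H$; however $\phi_0<0$ already for $H=K_r$ with $r\ge 5$, so this estimate becomes negative and fails. The remedy is to exploit the edge-partition of $F$ among the $F_i$ and apply the $\lambda_*$ inequality $s$ times: the $s$ copies of $\lambda_* v_H$ thereby produced on the right are absorbed exactly by the $s$ inductive contributions of $e_H-1$ on the left, and the inequality $\sum_i v_{C_i}-v_\sqcup\ge\sum_i v_{F_i}-v_F$ accounts for the slack coming from vertices shared by multiple $C_i$.
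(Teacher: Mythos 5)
Your proof is correct and follows essentially the same route as the paper's: induction on the REA steps, with the merging step handled by applying the defining inequality of $\lambda_*$ separately to each $F_i=H_j\cap C_i$ and exploiting edge-disjointness of the merged components. The only difference is organizational — the paper merges the $C_i$ one sub-step at a time, tracking the overlap counts $\eps_i,\delta_i$ together with some extra slack terms it needs for later lemmas, whereas you perform a single inclusion--exclusion count, and your uniform treatment of the case $v_{F_i}=v_H$ (via $e_{F_i}\le e_H-1$) absorbs the paper's separate Case~1.
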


\begin{defn}\label{D_lamstar}
We let $\cV_*\subset\{2,\ldots,v_H-1\}$ be the set 
for which $\lambda_*$ is attained by some subgraphs $F\subset H$ with 
$v_F\in\cV_*$. We put 
\[
\xi=
\min \frac{e_H-e_{F}-1}{v_H-v_{F}}
-\lambda_*
\]
minimizing over $F\subset H$ with 
$v_F\in\{2,\ldots,v_H-1\}\setminus \cV_*$.  
\end{defn}

Roughly speaking, we shall see that the most efficient way to add a new copy $H_j$
of $H$,  
in any given step $j$ of REA, 
is to ensure that the number of vertices 
in $V(H_j)\cap V(C')$
is in $\cV_*$, for each of the components $C'$ in
$\bigcup_{i=1}^j H_i$ that have an 
edge in common with $H_j$ (except in the simple case 
$H_j\setminus e_j\subset C'$, when only the red edge $e_j$
is added to $C'$ to obtain $C$). 
Note that all such components are merged with $H_j$ 
in the $j$th step of REA to form a new component $C$. 
The quantity $\xi$ is related to 
the minimum ``cost'' of a non-optimal 
merge in REA.

We will also use the following distinction of steps in REA.

\begin{defn}\label{D_cases}
We call the $j$th step in REA a {\it type-1 step} if $H_j\setminus e_j$ 
is contained in some component of $\bigcup_{i=1}^{j-1} H_i$ as a subgraph. 
Otherwise, we call it a {\it type-2 step}. 
In particular, we include steps in which a new component is formed as a trivial instance of a type-2 step. 
\end{defn}

\begin{proof}[Proof of \cref{L_HREA}]
The proof is by induction on the number of steps $j$ taken. 
The base case $j=1$ is trivial, since  
$H_1\setminus e_1$ has $v_H$ vertices and $e_H-1$ edges.  
Likewise, the same reasoning applies if in some step $j>1$ a new
component is created. 
Hence suppose that in step $j>1$, the addition of $H_j$ causes
exactly $h\ge1$ (edge-disjoint) components $C_1,\ldots, C_h$
(each with at least one edge in $H_j$) to merge with $H_j$ 
into a single component $C$. 
By assumption, we assume that the $C_i$ 
have $k_i$ vertices and 
$\lambda_* (k_i-v_H)+e_H-1+\ell_i$ non-red edges, for some $\ell_i\ge0$. 
Let $k$ denote the number of vertices in $C$.
Note that 
\[
k=v_H+\sum_i(
k_i-\eps_i-\eta_i)
\]
where  $\eps_i$ is the number of vertices in $V[C_i]\cap V[H_j]$, and 
$\eta_i$ is the number of vertices in 
$(V[C_i]\setminus V[H_j])\cap \bigcup_{i'<i}V[C_{i'}]$
(that is, other vertices in $C_i$ that are in a previously 
considered $C_{i'}$).
To complete the proof, we show that $C$ has at least 
$\lambda_*(k-v_H)+e_H-1$ non-red edges. We distinguish two scenarios based on whether the $j$th step is a 
type-1 or type-2 step.

{\it Case 1.} If $H_j\setminus e_j$ is contained in some component of $\bigcup_{i=1}^{j-1} H_i$ as a subgraph, 
then necessarily $h=1$, since the components $C_i$ are edge-disjoint.  
In this case, the result follows immediately, 
since then $k=k_1$ and a single red edge (and no non-red edges)
is added to form $C$. 

{\it Case 2.} On the other hand, suppose that no $C_i$ contains 
$H_j\setminus e_j$ as a subgraph. 
It is more convenient to start with $H_j$ and color $e_j$ red, 
and then merge the $C_i$ with it one at a time. 
In these dynamics, any edge in 
$C_i\cap H_j$ that is red in $C_i$ remains red after merging. 
Initially, we have the $e_H-1$ non-red edges in 
$H_j$. In the $i$th sub-step (when we merge $C_i$),
$k_i-\eps_i-\eta_i$ vertices are added. 
Note that, by the choice of $\lambda_*$ and $\xi$, 
at least 
\[
(v_H-\eps_i)(\lambda_*+\xi\1_{\eps_i\notin \cV_*})+\1_{\eps_i=v_H}
\] 
of the 
$e_H-1$ 
edges in $H_j\setminus e_j$
are not in $C_i$.
Therefore, since the components $\{C_{i'}\}_{i'\le i}$ are edge-disjoint, 
the number of non-red edges increases by at least 
\begin{equation}\label{E_REA}
\lambda_*(k_i-\eps_i)+\ell_i
+(v_H-\eps_i)\xi\1_{\eps_i\notin \cV_*}+\1_{\eps_i=v_H}
\end{equation}
in the $i$th sub-step.
Altogether, summing over all $i$, we find that  $C$ has $k$ vertices and at least
\[
(e_H-1)+\lambda_*\sum_i(k_i-\eps_i)+\sum_i[\ell_i
+(v_H-\eps_i)\xi\1_{\eps_i\notin \cV_*}+\1_{\eps_i=v_H}]
\]
non-red edges. Finally, note that the above is equal to 
\begin{multline}\label{E_excess}
(e_H-1)+\lambda_*(k-v_H+\sum_i\eta_i)
+\sum_i[\ell_i
+(v_H-\eps_i)\xi\1_{\eps_i\notin \cV_*}+\1_{\eps_i=v_H}]\\
=[\lambda_*(k-v_H)+e_H-1]
+\sum_i[\ell_i+\lambda_*\eta_i
+(v_H-\eps_i)\xi\1_{\eps_i\notin \cV_*}+\1_{\eps_i=v_H}]\\
\ge \lambda_*(k-v_H)+e_H-1,
\end{multline}
as required. 
\end{proof}

\section{Upper bound for strictly balanced $H$}\label{L_UBstrict}

Next, we prove the upper bound
in \cref{T_Hmain}. 
We show that, for a strictly balanced graph $H$,  
with high probability $\l\Gnp\r_H=K_n$, if $p=(\alpha/n)^{1/\lambda}$
and $\alpha>0$ is sufficiently large.

Two applications of the second moment method are involved. 
First we show that with probability bounded away from 0 
(and tending to 1 as $\alpha\to\infty$) any given edge
$e\in E[K_n]$ is added in $\l\Gnp\r_H$ due to a simple
type of witness graph, called an {\it $H$-ladder}. 
These graphs were considered in \cite{BBM12}. 
The main difference here is that we consider {\it induced}
$H$-ladders, 
resulting in an easier analysis of 
correlations (overlapping 
ladders). Then we show that the events that two given edges are added
in $\l\Gnp\r_H$ 
by induced $H$-ladders (of suitable heights) are roughly independent. 
Hence, a significant proportion 
(tending to 1 as $\alpha\to\infty$)
of all ${n\choose2}$ edges
in $K_n$ are included in $\l\Gnp\r_H$. 
Full percolation is then easily deduced 
(by Tur\'an's Theorem and sprinkling).

\subsection{$H$-ladders}\label{S_LadderLemmas}

We consider 
(as in Section 2 of \cite{BBM12})
the following type of 
edge-minimal witness graph, 
where the associated graph (as in the discussion before \cref{D_COMP})
is a path. 

\begin{defn}\label{D_ladder}
An {\it $H$-ladder} $L$ of height $h$
(see \cref{F_ladder} below)
is a graph 
constructed 
using $h$ copies 
$S_i$ (called {\it steps})
of $H$ minus two non-incident edges (called {\it rungs}) 
$\{u_{i-1},v_{i-1}\}$ and 
$\{u_{i},v_{i}\}$ such that, for each $1<i\le h$, we have that 
$V(S_i)\cap \bigcup_{j<i}V(S_j)=\{u_{i-1},v_{i-1}\}$. 
We then obtain $L$ as the union of the $S_i$ and the {\it top rung} 
$\{u_h,v_h\}$. 
We call 
$(v_H-2)h$ the {\it size,}
$h$ the {\it height} and 
$\{u_0,v_0\}$ the {\it base} of $L$. 
We often write $k=(v_H-2)h$.
\end{defn}

By induction on $h$, it is easy to see that $L$ is a witness graph for its base edge. Note that $L$ has 
$\lambda k+1$ edges, the minimal possible number by \cref{P_lamstar}. To see this note that each $S_i$ has 
$e_H-2=\lambda(v_H-2)$ edges. Since only the top rung 
$\{u_h,v_h\}$ is included in $L$, it has only 
$\lambda(v_H-2)h+1=\lambda k+1$ edges in total. 

It is convenient, although slightly informal, to speak of vertices and edges
of a ladder $L$ that are ``above'' and ``below'' its various rungs, etc. 
In this sense, note that $\lambda$ is the average number of edges
``sent down the ladder'' by vertices ``above'' the base. 

\begin{figure}[h]
\centering
\includegraphics[scale=0.75]{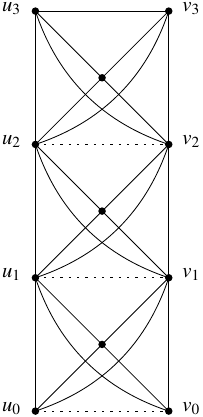}
\caption{
A $K_5$-ladder of height $h=3$ and size $k=(5-2)h=9$
has $k+2=11$ vertices and $\lambda k+1=[{5\choose2}-2]h+1=25$ edges.  
}
\label{F_ladder}
\end{figure}

Let us note here that we will, beginning in the next section, restrict to a specific class of $H$-ladders
(see \cref{D_uniformL} below) that is simpler and suffices for our purposes. 
However, before doing so, we first establish the following result that holds
for $H$-ladders in general. 

In \cite{BBM12} (see Lemma 6) it is shown that any subgraph $X\subset L$ containing 
$x+2< k+2$ vertices of $L$, including those in its base, has at most $\lambda x$ edges. 
Equality is obtained if 
$X=\bigcup_{i\le h'} S_i$ for some $1\le h'<h$. 
We prove the following estimate, 
which bounds the 
inefficiency of edge sharing in the other cases. 

Since $H$ is strictly balanced (see \cref{D_lamstar}), 
\[
\xi=
\min\frac{e_H-e_{F}-1}{v_H-v_{F}}
-\lambda>0
\]
minimizing over $F\subset H$ with $3\le v_F<v_H$. 
The case of $F$ with $v_F=v_H-1$ gives the bound 
$\xi\le \delta_H-1-\lambda\le \delta_H-2$ 
(recall that $\delta_H\ge2$, and so 
$\lambda\ge1$). 

\begin{lem}\label{L_Hladder}
Let $L$ be an $H$-ladder of size $k=(v_H-2)h$. 
Let $X$ be a proper induced subgraph of $L$
that contains $x$ vertices above the base of $L$. 
Then $X$ has at most $\lambda x-\xi \sigma$ edges, where
$\sigma$ is the number of steps $S_i\not\subset X$ of $L$ such that 
$X$ contains at least one vertex
in $V[S_i]\setminus\{u_{i-1},v_{i-1}\}$. 
\end{lem}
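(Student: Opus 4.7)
The plan is to decompose $e(X)$ step by step along the ladder and apply the strict balance of $H$ to each step. Set $V_i = V(X) \cap V(S_i)$, $N_i = V(S_i) \setminus \{u_{i-1}, v_{i-1}\}$, $x_i = |V_i \cap N_i|$, $s_i = |V_i \cap \{u_{i-1}, v_{i-1}\}|$, and $v_i' = x_i + s_i = |V_i|$. The sets $N_1, \dots, N_h$ partition $V(L) \setminus \{u_0, v_0\}$, so $\sum_i x_i = x$. Since the edge sets of $S_1, \dots, S_h$ and the top rung $(u_h, v_h)$ partition $E(L)$,
\[
e(X) = \sum_{i=1}^h e_i' + \epsilon_{\mathrm{top}}, \qquad e_i' := |E(X) \cap E(S_i)|, \quad \epsilon_{\mathrm{top}} := \1[u_h, v_h \in V(X)].
\]
I would classify each step $i$ as \textup{(A)} $V_i = V(S_i)$; \textup{(B)} $V_i \subsetneq V(S_i)$ with $x_i \ge 1$; or \textup{(C)} $x_i = 0$, so that $\sigma$ is precisely the number of Case \textup{(B)} steps.

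The first technical step is the per-step bound $\Delta_i := \lambda x_i - e_i' \ge \xi + \rho_i^{\mathrm{top}}$ for Case \textup{(B)}, where $\rho_i^{\mathrm{top}} := \1[\{u_i, v_i\} \subseteq V_i]$. In Cases \textup{(A)} and \textup{(C)} one has $\Delta_i = 0$ directly. For Case \textup{(B)} with $v_i' \ge 3$, I would write $e_i' = e(H[V_i]) - \rho_i^{\mathrm{bot}} - \rho_i^{\mathrm{top}}$ with $\rho_i^{\mathrm{bot}} := \1[s_i = 2]$, apply the defining inequality $e(H[V_i]) \le e_H - 1 - (\lambda + \xi)(v_H - v_i')$ of $\xi$ to $F = H[V_i]$, and simplify using $e_H - 2 = \lambda(v_H - 2)$ to obtain
\[
\Delta_i \ge \lambda(2 - s_i) - 1 + \xi(v_H - v_i') + \rho_i^{\mathrm{bot}} + \rho_i^{\mathrm{top}}.
\]
A short split on $s_i \in \{0, 1, 2\}$ using $\lambda \ge 1$, $v_H - v_i' \ge 1$ (since $v_i' < v_H$ in Case \textup{(B)}), and $\xi \le \lambda - 1$ (a routine consequence of $\delta_H \le 2\lambda$, itself derived from the handshake identity together with $\delta_H \ge 2$) then yields the claim. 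The edge cases $v_i' \le 2$ are checked directly using $e_i' \le 1$.

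Summing across all steps gives
\[
e(X) \le \lambda x - \sum_{i \in \mathrm{B}} \Delta_i + \epsilon_{\mathrm{top}} \le \lambda x - \xi \sigma - \sum_{i \in \mathrm{B}} \rho_i^{\mathrm{top}} + \epsilon_{\mathrm{top}},
\]
so it remains to show $\sum_{i \in \mathrm{B}} \rho_i^{\mathrm{top}} \ge \epsilon_{\mathrm{top}}$. If $\epsilon_{\mathrm{top}} = 1$ and step $h$ is itself in Case \textup{(B)}, then $\{u_h, v_h\} \subseteq V_h$ gives $\rho_h^{\mathrm{top}} = 1$ immediately. The main obstacle is the residual subcase that step $h$ is in Case \textup{(A)}, where the $+1$ from $\epsilon_{\mathrm{top}}$ must be recovered from an earlier step: here I would argue structurally that $V(S_h) \subseteq V(X)$ forces $u_{h-1}, v_{h-1} \in V(X)$ and hence $x_{h-1} \ge 2$, so step $h-1$ is in Case \textup{(A)} or \textup{(B)} (never \textup{(C)}). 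Iterating, the backward chain from $h$ consists of Case \textup{(A)} steps until it first meets a Case \textup{(B)} step, which must exist since otherwise all steps are \textup{(A)} and $V(X) = V(L)$, contradicting properness. At that first Case \textup{(B)} step $j$, the subsequent step $j+1$ is in Case \textup{(A)}, so $\{u_j, v_j\} \subseteq V(S_{j+1}) \subseteq V(X)$ and $\rho_j^{\mathrm{top}} = 1$. This supplies the required savings and completes the bound $e(X) \le \lambda x - \xi \sigma$.
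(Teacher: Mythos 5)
Your proof is correct, and it follows a genuinely different route from the paper's. The paper argues by induction on the ladder height $h$: it peels off the bottom step $S_1$, compares $X\cap S_1$ against the base case, and applies the inductive hypothesis to the sub-ladder $L'$ above the first rung, with a separate case (Case 3a) to recover the $+1$ for the top rung when $X\cap L'=L'$. You instead give a one-pass, non-inductive argument: partition $E(X)$ exactly across the steps $S_1,\dots,S_h$ plus the top rung, prove a clean per-step deficit bound $\Delta_i\ge \xi+\rho_i^{\mathrm{top}}$ for exactly the steps counted by $\sigma$ (using the strict-balance inequality applied to $F=H[V_i]$ together with $e_H-2=\lambda(v_H-2)$, $\lambda\ge1$, and the easily-verified $\xi\le\lambda-1$), and sum. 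The remaining obligation $\sum_{i\in B}\rho_i^{\mathrm{top}}\ge\epsilon_{\mathrm{top}}$ is handled by your backward-chain argument, which correctly rules out Case (C) along the chain (since step $i+1$ being (A) forces $x_i\ge2$), terminates at a Case (B) step by properness of $X$, and produces the required $\rho_j^{\mathrm{top}}=1$ there. What the paper's induction buys is a shorter write-up that leans on the recursive structure of $L$; what your decomposition buys is a more transparent accounting of exactly where each unit of $\xi$ comes from, and it isolates the slightly delicate top-rung bookkeeping into a single self-contained combinatorial observation rather than threading it through the inductive case split. Both proofs rely on the same inequality (the defining bound for $\xi$) as the engine, so they are closely related in spirit even though the bookkeeping scaffolding differs.
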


Note that $\sigma=0$ if and only if 
$x=0$ or 
$X=\bigcup_{i\le h'} S_i$ for some $1\le h'<h$. 
This result, in particular, implies Lemma 6 in \cite{BBM12}
(without the condition $\lambda\ge2$). 
Also note that we do not require that $X$ contains the base vertices of $L$. 
This allows for an easier inductive proof, and 
will be useful for analyzing overlapping ladders with different bases
(\cref{L_2ndMM} below). 

\begin{proof}
The proof is by induction on the height $h$ of $L$. 
If $x=0$ (and so also $\sigma=0$) the statement is trivial. 
Thus we assume $x\ge1$. 

{\it Base case.} 
If $h=1$ then $1\le x\le v_H-2$ and $\sigma=1$ (since $X$ is proper). 
There are $\lambda(v_H-2)+1$ edges in $L$. 

{\it Case 1a.}
If $x=v_H-2$, then 
at least one vertex in the base of $L$ is 
not in $X$.
Hence there are at least $\delta_H-1\ge \xi+1$ edges in $E[L]\setminus E[X]$,
and so at   
most $\lambda x-\xi$ in $E[X]$. 

{\it Case 1b.}
If  $1\le x< v_H-2$, then there are at least 
$(\lambda+\xi)(v_H-2-x)+1$
edges in $E[L]\setminus E[X]$,
and so at most  
$\lambda x-\xi(v_H-2-x)\le \lambda x-\xi$
in $E[X]$. 

{\it Inductive step.} Suppose  $h>1$.   
Let $L'\subset L$ be the ladder of height $h-1$ 
based at the first rung $\{u_1,v_1\}$ of $L$. 

{\it Case 2.} If $S_1\subset X$ or $V[X\cap S_1]\subset\{u_0,v_0\}$, 
then the result follows immediately by the inductive
hypothesis applied to $L'$ (since in either case $X\cap L'\subset L'$ is 
proper). 

{\it Case 3.} Suppose that $X$ contains $x_1\ge1$
vertices in 
$S_1\setminus\{u_0,v_0\}$ and $S_1\not\subset X$. 
Then, by the base case,  
$X$ contains at most $\lambda x_1-\xi-\1_{u_1,v_1\in V[X]}$ edges in $S_1$
(since $h>1$, the edge $\{u_1,v_1\}\notin E[L]$). 

{\it Case 3a.}
If $X\cap L'= L'$ (in which case $\sigma=1$, 
and $u_1,v_1\in V[X]$)
the claim follows, since then there are $\lambda x_1-\xi-1$ 
edges in $X$ below the first rung, $\lambda(k-x_1)+1$
above, and so $\lambda x-\xi$ in total.

{\it Case 3b.}
Otherwise, applying the inductive hypothesis
to the remaining $x-x_1$ vertices of $X$ in $L'$, 
it follows that there are at most $\lambda(x-x_1)-\xi(\sigma-1)$
edges in $X\cap L'$. Hence $L$ has at most 
$\lambda x-\xi\sigma$ edges.  
\end{proof}

\subsection{$H$-Ladders in $\Gnp$}

Having established \cref{L_Hladder}, 
we turn to the upper bound for $p_c$. 
We first obtain a lower bound on the probability that a given 
edge $e\in E[K_n]$ is the base of an $H$-ladder of height $h$ in 
$\Gnp$. This gives a lower bound on the probability that $e\in E[\l \Gnp\r_H]$. 
We then verify the approximate independence for different bases. 
This strategy thus involves 
two applications of 
the second moment method. 
As already discussed, 
we restrict to the case of 
induced $H$-ladders, since
this simplifies the analysis of correlations. 
Furthermore, we also restrict our attention to a 
specific type of $H$-ladder, defined as follows. 

\begin{defn}
Fix two non-incident edges $e_t,e_b$ in $H$, 
and a copy $T$ of $H\setminus\{e_t,e_b\}$
labelled in some arbitrary way such that 
\begin{itemize}[nosep]
\item the vertices in $e_b$
are labelled by $\{1,2\}$,  and 
\item all other vertices (not in $e_b$) in $H$
are labelled by $\{3,4\ldots, v_H\}$. 
\end{itemize} 
We call $T$ the {\it template}. 
\end{defn}

We fix $T$ for the rest of this work. 
We use the template $T$ to define a simple class
of $H$-ladders, where, informally speaking, 
copies of $T$ are stacked on top of each other. 

\begin{defn}\label{D_uniformL}
A (labelled) $H$-ladder 
(see \cref{D_ladder}) 
is {\it uniform} if, for each of its steps $S_i$, the function $\phi_i$ for which
\begin{itemize}[nosep]
\item $\phi_i(u_{i-1})=1$,  
$\phi_i(v_{i-1})=2$, and 
\item $\phi_i(w_k)=k+2$, for 
$w_k\in V[S_i]\setminus\{u_{i-1},v_{i-1}\}$ 
of $k$th largest label, 
\end{itemize} 
is an isomorphism from $S_i$ to the template $T$. 
\end{defn}

Note that, since $T$ is fixed,  there are exactly   
\[
{k\choose v_H-2,\ldots,v_H-2}=\frac{k!}{(v_H-2)!^h}
\]
uniform $H$-ladders of size $k=(v_H-2)h$
(and height $h$)
on a given set of $k+2$ vertices and with a given base $e$. 
Indeed, the conditions in \cref{D_uniformL}
imply that
the only freedom in selecting such an $H$-ladder
is in choosing which  
vertices are in each of the $h$ sets
$V[S_i]\setminus\{u_{i-1},v_{i-1}\}$. 

\begin{defn}
For $\eps\in(0,1)$, put
\begin{equation}\label{E_alpha_eps}
\alpha_\eps=\exp\left[
\left(\frac{v_H+1}{\xi}+\frac{1-\eps}{4}\right)\frac{\log(v_H-2)}{\eps}
\right]
\end{equation}
and
\begin{equation}\label{E_beta_eps}
\beta_\eps=\frac{\eps\xi}{\lambda(v_H+1)(v_H-2)\log(v_H-2)}.
\end{equation}
\end{defn}
Note that $\alpha_\eps\to\infty$ and 
$\beta_\eps\to0$, as $\eps\to0$. 
We also note that $\alpha_\eps,\beta_\eps$
are chosen so that \eqref{E_ENk>>1},
\eqref{E_beta_eps} and \eqref{E_deltaeps2} below simplify nicely 
(but are, of course, not the only choice of $\alpha,\beta$ for which 
the following lemmas hold). 

First we show that 
a given edge in $K_n$ is the base of an $H$-ladder  
with probability tending to 1 as 
$np^\lambda\to\infty$. 

\begin{lem}\label{L_gamma}
Fix $\eps\in(0,1)$.
Put $np^\lambda=\alpha_\eps(v_H-2)!^{1/(v_H-2)}$. 
Then any given $e\in E[K_n]$ is the base of an induced 
uniform $H$-ladder of height $h=\beta_\eps\log n$ in $\Gnp$ with probability
at least $\gamma_\eps-o(1)$, where
\[
\gamma_\eps=1-\frac{1}{\alpha_\eps^{v_H-2}-1}.
\]
\end{lem}

For ease of exposition, we write quantities such as $h=\beta_\eps\log n$ 
as is, instead of replacing them with their integer parts.  

\begin{proof}
Let $N_k$ denote the 
number of induced uniform $H$-ladders 
in $\Gnp$ 
of size $k=(v_H-2)h$ with a given base $e$. 
Noting that $k\ll n$ and $k^2p\ll1$ (and using the
standard bound ${n\choose k}\ge(n-k)^k/k!$) we find that 
\begin{equation}\label{E_ENk}
\E N_k =
{n-2\choose k} 
\frac{k!}{(v_H-2)!^h}
p^{\lambda k+1}(1-p)^{{k+2\choose2}-(\lambda k+1)}
\ge p\alpha_\eps^k(1-o(1)).
\end{equation}
Since, by \eqref{E_alpha_eps}
and \eqref{E_beta_eps},  
\begin{equation}\label{E_ENk>>1}
\lambda\beta_\eps(v_H-2)\log\alpha_\eps
=1+\frac{(1-\eps)\xi}{4(v_H+1)}>1,
\end{equation}
it follows that $\E N_k\gg1$.  

Let $L_1,L_2,\ldots$ enumerate all uniform $H$-ladders 
in $K_n$ of size $k$
that are based at $e$. Let $A_i$ be the event that $L_i$ is an induced
subgraph of $\Gnp$. 
Following Section 4.3 of \cite{AS16}, and using symmetry, 
\begin{align}\label{EQ_2ndMM}
\P(N_k=0)
&\le\frac{\E(N_k^2)}{(\E N_k)^2}-1
=\frac{1}{\E N_k}+\frac{1}{(\E N_k)^2}\sum_{i\neq j}\P(A_i\cap A_j)-1\\\nonumber
&=\frac{1}{\E N_k}\left[1+\sum_{i>1}\P(A_i|A_1)\right]-1. 
\end{align}
Since we are considering
induced subgraphs of $\Gnp$, for any $i\neq 1$, 
we have $\P(A_i|A_1)=0$ unless $V[L_i]\neq V[L_1]$. 
Let $S_i$, $1\le i\le h$, denote the steps of $L_1$. 

{\it Case 1.} 
First, we consider the case that $L_i$ ``breaks cleanly'' from $L_1$ at one of its rungs, that is, $L_i\cap L_1=e$ 
or $L_i\cap L_1=\bigcup_{i\le h'} S_i$, for some $1\le h'<h$.
 
If 
$L_i\cap L_1=e$ then 
$L_i$ is an $H$-ladder of height $h$ that is edge-disjoint
from $L_1$. Hence 
$\P(A_i|A_1)\le p^{\lambda k+1}$.
Similarly, if 
$L_i\cap L_1=\bigcup_{i\le h'} S_i$, for some $1\le h'<h$, 
then $L_i$ and $L_1$ agree up to height $h'$. The part
of $L_i$ that is ``above'' the intersection $L_i\cap L_1$ 
is an $H$-ladder 
(based at the $h'$th rung $\{u_{h'},v_{h'}\}$ of $L_1$) 
of height $h-h'$ 
that is edge-disjoint from $L_1$. 
Hence, in this case, $\P(A_i|A_1)\le p^{\lambda k'+1}$, where $k'=(v_H-2)(h-h')$.
Summing over all such $L_i$, using \eqref{E_ENk}, 
\begin{align*}
\frac{1}{\E N_k}\sum \P(A_i|A_1)
&\le 
\frac{1}{\E N_k}\sum_{h'=0}^{h-1}
\frac{n^{k'}p^{\lambda k'+1}}{(v_H-2)!^{h-h'}}\\
&\le 
(1+o(1))\sum_{h'=0}^{h-1} \alpha_\eps^{-(v_H-2)h'}\\
&\le \frac{1+o(1)}{1-1/\alpha_\eps^{v_H-2}}.  
\end{align*}

{\it Case 2.} 
Next, we show that all other cases are of lower order. 
If $L_i$ does not ``break cleanly'' (as in Case 1) 
from $L_1$ then 
by \cref{L_Hladder}, $\P(A_i|A_1)\le p^{\lambda(k-x)+1+\xi\sigma}$,
where $x$ is the number of vertices in $X=L_i\cap L_1$ above the base of $L_1$, and 
$\sigma\ge1$ is the number of $S_i\not\subset X$ such that 
$V[X]\cap (V[S_i]\setminus\{u_{i-1},v_{i-1}\})\neq\emptyset$. 

For any such $L_i$, let $s\ge0$ be the number of maximal subgraphs
$\bigcup_{i=h_1}^{h_2} S_i\subset X$, $h_1\le h_2$. 
For convenience, we refer to these as the  
sub-ladders that $L_1$ and $L_i$ have in common. 
Let $y\ge0$ denote the number of 
other vertices in $X$ (not inside a common sub-ladder). 
Note that $s+y\ge1$ since $\sigma\ge1$. 
We claim that 
\begin{equation}\label{E_sig}
\sigma\ge\max\{1,(s-1+y)/v_H\}\ge(s+y)/(v_H+1).
\end{equation}
To see this, 
note that there are at least $2(s-1)\1_{s\ge1}+y$ vertices
of $X$ in steps $S_i\not\subset X$,  
since 
if $\bigcup_{i=h_1}^{h_2} S_i\subset X$ is maximal and $h_1>1$, then 
$S_{h_1-1}\not\subset X$ and $u_{h_1-1},v_{h_1-1}\in V[X]$. 

Next, we claim that, for given $x,s,y$,  there are at most
\[
\binom{h+1}{2s}s!\binom{h}{s}2^s\binom{k}{y}\binom{n}{k-x}(k-x+y)!
\le [O(k^3)]^{s+y} n^{k-x}
\]
ladders $L_i$ to consider. To see this, observe that: 
\begin{enumerate}[nosep]
\item 
there are $\tbinom{h+1}{2s}$ 
ways to select the $s$
common sub-ladders (since this corresponds to 
selecting $2s$ rungs in $L_1$) and $s!$ ways to 
choose the order in which they can appear 
in $L_i$, 
\item   
there are at most $\tbinom{h}{s}$ 
possibilities for where the sub-ladders are located in $L_i$  
(choose a height
for the top rung of each), and $2^s$ ways to decided 
whether the top rung (as it appears in $L_1$) of each sub-ladder 
is the top or bottom 
rung as it appears in $L_i$, and 
\item the final three factors bound the choices for the 
$k-x+y$ 
other vertices in $L_i$ and 
their locations in $L_i$. 
\end{enumerate}
Note that in (2) we need not also consider the possibility 
that the labels in a top or bottom rung of a common sub-ladder are 
reversed (with respect to $e$) in $L_i$ as compared with how they appear
in $L_1$. Indeed, doing so would either produce 
a non-uniform $H$-ladder, or else one that is 
equivalent to $L_1$.

Hence, summing over all such $L_i$ with given $x,s,y$, we
find (using $\alpha_\eps>1$, $x\ge1$, $s+y\ge1$, \eqref{E_ENk} and \eqref{E_sig}) that 
\begin{align*}
\frac{1}{\E N_k}\sum \P(A_i|A_1)
&\le 
\frac{1}{\E N_k}[O(k^3)]^{s+y} n^{k-x}
p^{\lambda(k-x)+1+\xi\sigma}\\
&\le 
O[(\log n)^3\frac{(np^\lambda)^{k-x}}{\E N_k}
p^{1+\xi/(v_H+1)}]\\
&\le 
O[(\log n)^3(v_H-2)!^{h}
p^{\xi/(v_H+1)}]\ll n^{-\vartheta_\eps}, 
\end{align*}
where, by \eqref{E_beta_eps},  
\begin{equation}\label{E_delta}
\vartheta_\eps=
\frac{\xi}{\lambda(v_H+1)}-
\beta_\eps(v_H-2)\log(v_H-2)
=\frac{(1-\eps)\xi}{\lambda(v_H+1)}>0.
\end{equation}
Since there are only $O(k^3)$ relevant $x,s,y$ the same
holds summing over all $L_i$ (not included in Case 1). 

Therefore, combining the two cases, we find that 
\[
\frac{1}{\E N_k}\sum_{i\ge1}\P(A_i|A_1)
\le \frac{1+o(1)}{1-1/\alpha_\eps^{v_H-2}},
\]
and so, by 
\eqref{EQ_2ndMM}, 
\[
\P(N_k>0)\ge1-\frac{1+o(1)}{\alpha_\eps^{v_H-2}-1}
=\gamma_\eps-o(1).\qedhere
\]
\end{proof}

Next, by another application of the second moment method, 
we show that with high probability a significant proportion
(tending to 1 as $\alpha\to\infty$) 
of edges in $K_n$ 
are bases of $H$-ladders in $\Gnp$, and so included
in $\l\Gnp\r_H$. 

\begin{lem}\label{L_2ndMM}
Fix $\eps\in(0,1)$.
Put $np^\lambda=\alpha_\eps(v_H-2)!^{1/(v_H-2)}$. 
Then, with high probability, there 
are at least $(\gamma_\eps-\eps){n\choose2}$
edges in $K_n$ which are bases of induced
uniform 
$H$-ladders of height $h=\beta_\eps\log n$ in $\Gnp$. 
\end{lem}

\begin{proof}
Let $e_1,e_2,\ldots$ enumerate the edges of $K_n$
and let $E_i$ denote the event that $e_i$ is the base of an induced 
uniform $H$-ladder
of height $h$ in $\Gnp$. 
Similarly, let $E_i'$ denote the event that $e_i$ is the base of a (not necessarily induced) 
uniform $H$-ladder
of height $h$ in $\Gnp$. 
We show that 
\[
\frac{1}{[{n\choose2}\P(E_1)]^2}\sum_{i\neq j}\P(E_i\cap E_j)\le 1+o(1) 
\]
from which, together with \cref{L_gamma}, the result follows (as then 
the number of such edges $\sum_i\1_{E_i}$, 
divided by its expectation ${n\choose2}\P(E_1)$, 
converges to 1 in probability, see again 
Section 4.3 of \cite{AS16} and the technique used in 
\eqref{EQ_2ndMM}). 

To this end, we bound the event $E_i\cap E_j$ by the union of events (1)
$E_i'\circ E_j'$ that there are edge-disjoint (not necessarily induced) 
uniform $H$-ladders of heights $h$
based at $e_i$ and $e_j$, and, (2) $E_{ij}$ that there is an induced uniform $H$-ladder of height
$h$ based at $e_j$ that includes an edge of such a ladder based at $e_i$. 

By the van den Berg--Kesten (BK) inequality \cite{BK85} (the events $E_i'$ are increasing) and symmetry, 
\[
\P(E_i'\circ E_j')\le \P(E_1')^2= (1+o(1))\P(E_1)^2,
\]
where the last equality follows since $k^2p\ll1$. 
It thus suffices to show that 
\[
\sum_{i\neq j}\P(E_{ij})\ll n^4, 
\]
since by \cref{L_gamma}, the probability $\P(E_1)\ge\gamma_\eps-o(1)$ (and so, in particular, bounded 
away from 0
as $n\to\infty$). 

Let $L_1$ be a fixed uniform $H$-ladder of height $h$ in $K_n$ based at $e_1$, 
and  let $A_1$ denote the event that $L_1$ is an induced subgraph of $\Gnp$. 
For $j>1$, let $B_{j}$ be the event that there is an induced 
uniform $H$-ladder in $\Gnp$
of height $h$ based at $e_j$ that includes at least one edge in $L_1$. 
As in the previous proof, let $N_k$ denote the 
number of induced uniform $H$-ladders 
in $\Gnp$ 
of size $k=(v_H-2)h$ with a given base $e$. 
Note that $\E(N_k)\le p\alpha_\eps^k$. Hence, by symmetry, we have 
\begin{align*}
\sum_{i\neq j}\P(E_{ij})
&\le n^2\sum_{j>1}\P(E_{1j})\\
&\le n^{2}\E(N_k)\sum_{j>1}\P(B_j|A_1)\\
&\le n^2 p\alpha_\eps^k\sum_{j>1}\P(B_j|A_1). 
\end{align*}
Hence, it suffices to show that 
\[
p\alpha_\eps^k\sum_{j>1}\P(B_j|A_1)\ll n^2. 
\]

Finally, we estimate $\sum_{j>1}\P(B_j|A_1)$ by 
a union bound, considering the expected 
(conditioned on $A_1$)
number 
of induced $H$-ladders $L$ of 
height $h$ based at some $e\neq e_1$ that include at least one edge of $L_1$,
and hence at least one vertex not in its base $e_1$. 

At this point, the argument is similar to the proof 
of \cref{L_gamma}, and so we only sketch the details. 
As before, we take two cases with respect to whether
 $L$ and $L_1$ ``intersect cleanly''
(that is, if  $L\cap L_1=\bigcup_{i\le h'}S_i$
for some $1\le h'<h$, where $S_i$ are the steps of $L_1$)
or not. 
Note that if $L$ and $L_1$ share at least one common edge, then 
we cannot have that $L\cap L_1=e_1$, since if $A_1$ occurs then 
$e\notin E[\Gnp]$.

{\it Case 1.} If $L$ and  $L_1$ ``intersect cleanly'' then there are $O(h)$ possibilities for where
(i.e., the height at which)
$L\cap L_1$ is located in $L$. Apart from this, by an argument similar to that in 
Case 1 in proof of \cref{L_gamma}, we see that the expected number of such $L$
is at most ${n\choose2}O(h p\alpha_\eps^k/n^2)\le O(k p\alpha_\eps^k)$. 
The compensating factor $1/n^2$ here is due to the fact that 
there are $(v_H-2)h'+2$ vertices, but only 
$\lambda(v_H-2)h'$ edges,  in $L\cap L_1$. 

{\it Case 2.} Otherwise, if $L$ and  $L_1$ do not ``intersect cleanly'' then, 
arguing as in Case 2 in the proof of \cref{L_gamma}, the expected number of such $L$
in this case is $\ll{n\choose2}n^{-\vartheta_\eps}p\alpha_\eps^k$, where 
$\vartheta_\eps>0$ is as defined in \eqref{E_delta}.  

Altogether, 
\[
\frac{p\alpha_\eps^k}{n^2}\sum_{j>1}\P(B_j|A_1)
\le O[(p\alpha_\eps^k)^2(kn^{-2}+n^{-\vartheta_\eps})]
=O[(p\alpha_\eps^k)^2 n^{-\vartheta_\eps}]
\ll1,
\]
since, by \eqref{E_ENk>>1} and \eqref{E_delta},
\begin{equation}\label{E_deltaeps2}
\frac{2}{\lambda}(-1+\lambda\beta_\eps(v_H-2)\log\alpha_\eps)
-\vartheta_\eps
=
-\frac{\vartheta_\eps}{2}<0.\qedhere
\end{equation}
\end{proof}

\subsection{The upper bound}

With \cref{L_2ndMM} at hand, we obtain our upper bound for $p_c$
by an adaptation of the argument found at the end of 
Section 2 in \cite{BBM12}. 

\begin{proof}[Proof of \cref{T_Hmain}]
Let $p=(\alpha/n)^{1/\lambda}$. We show that
for $\alpha>0$ sufficiently large, $\l\Gnp\r_H=K_n$ with high probability.

Let $G=(V,E)$ be a graph. If only $\eps n$ vertices $v\in V$
have degree $d_v\ge\delta (n-1)$, for some $\delta$,  
then $|E|\le [\eps+(1-\eps)\delta]{n\choose2}$. 
Hence, if $|E|>\gamma{n\choose2}$, there is a set
$S\subset V$ of size satisfying $|S|/n\ge(\gamma-\delta)/(1-\delta)$
so that all $v\in S$ have $d_v\ge\delta(n-1)$.

Therefore, by  \cref{L_2ndMM}, for $\alpha>0$
large (and so $\gamma$ close to $1$) with high probability 
there is a set $S$ of size $\Omega(n)$ such that all neighborhoods
$N_v$  in $\l\Gnp\r_H$ of vertices $v\in S$ are larger than $(3/4)n$. 
As a result, all $u,v\in S$
have $|N_u\cap N_v|\ge n/2$. 
Also, for $\alpha$ large enough, all induced
subgraphs of $\l\Gnp\r_H$ of size $n/4$ contain a copy of $K_{v_H-2}$
by Tur\'an's Theorem. 
Hence all edges between vertices in $S$ are in $\l\Gnp\r_H$.

Once a percolating subgraph $S$ of size $\Omega (n)$
has been established, the result follows easily
by sprinkling, as in \cite{BBM12}. 
For completeness, we sketch the argument.
Consider a random graph $\G_{n,p'}$ that is
independent of $\Gnp$ with 
$(\log n)/n\ll p'\ll p$. 
Since $H$ is strictly balanced, such a $p'$ exists, as $p=\Omega(n^{-1/\lambda})$ 
for some $\lambda>1$ (see below 
\cref{D_SB}). 
Due to $(\log n)/n\ll p'$, with high probability, 
in the graph $\G_{n,p'}$, 
all vertices outside of $S$ have
at least $v_H-2$ neighbors in $S$.
Hence, $\l\Gnp\cup \G_{n,p'}\r_H=K_n$ with high probability.
This implies the result, noting that
$\Gnp\cup \G_{n,p'}$ is a random graph 
with edge probability $1-(1-p)(1-p')\sim p$.
\end{proof}

\appendix

\section{Supplementary facts}\label{A_supp}

\subsection{Balanced graphs}\label{A_supp1}
We note here some basic facts about balanced graphs $H$. 
Recall \cref{D_bal} and the definition of 2-balanced graphs below that. 
Also recall that we assume throughout this work that $\delta_H\ge2$ and $v_H\ge4$. 
Hence $e_H\ge v_H$. 

\begin{lemma}
For any graph $H$, we have that $H$ is balanced if and only if 
$H\setminus e$ is 2-balanced for all edges $e\in E[H]$. 
\end{lemma}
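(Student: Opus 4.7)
The plan is to unfold both conditions into statements about the same ratio $(e_F-1)/(v_F-2)$ and check they coincide. The key preliminary calculation is that for any edge $e \in H$, we have $v_{H\setminus e} = v_H$ and $e_{H\setminus e} = e_H - 1$, so
\[
\frac{e_{H\setminus e}-1}{v_{H\setminus e}-2}=\frac{e_H-2}{v_H-2}=\lambda.
\]
Thus the 2-balanced condition on $H\setminus e$ unpacks to: every subgraph $F\subseteq H\setminus e$ with $v_F\ge 3$ satisfies $(e_F-1)/(v_F-2)\le\lambda$.

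For the forward direction, assume $H$ is balanced, fix any $e\in H$, and let $F\subseteq H\setminus e$ with $v_F\ge 3$. Since $e\notin F$, we have $F\subsetneq H$, so the balanced hypothesis gives $(e_F-1)/(v_F-2)\le \lambda$ directly. Combined with the preliminary calculation, this shows $H\setminus e$ is 2-balanced.

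For the reverse direction, assume $H\setminus e$ is 2-balanced for every $e\in H$, and let $F\subsetneq H$ be a proper subgraph with $v_F\ge 3$. The only work is to produce an edge $e\in H$ not in $F$, so that $F\subseteq H\setminus e$ and the 2-balanced hypothesis finishes. Case split: if $v_F=v_H$, then $F\subsetneq H$ means $F$ is missing some edge; if $v_F<v_H$, pick any vertex $u\in V(H)\setminus V(F)$ and use $\delta_H\ge 2$ to produce an edge $e$ incident to $u$, which cannot lie in $F$. In either case we obtain the desired $e$, and the 2-balanced condition applied to $H\setminus e$ yields $(e_F-1)/(v_F-2)\le \lambda$.

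The whole argument is essentially definitional; there is no real obstacle beyond the small case analysis in the reverse direction, which relies on the standing assumption $\delta_H\ge 2$ to handle vertex-deficient subgraphs.
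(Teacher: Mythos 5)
Your proof is correct and follows essentially the same route as the paper: unpack both conditions into the common inequality $(e_F-1)/(v_F-2)\le\lambda$ and pass a subgraph $F$ between $H$ and $H\setminus e$ in each direction. The only difference is that you spell out the existence of an edge $e\in H\setminus F$ (via the case split using $\delta_H\ge 2$), a detail the paper's proof leaves implicit with ``let $e\in H\setminus F$.''
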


\begin{proof}
Suppose that $H\setminus e$ is 2-balanced for all edges $e\in H$. 
Let $F$ be a proper subgraph of $H$ with $v_F\ge3$. 
Let $e\in E[H]\setminus E[F]$. Since $H'=H\setminus e$ is 2-balanced, 
it follows that $(e_F-1)/(v_F-2)\le(e_{H'}-1)/(v_{H'}-2)=\lambda$. 
Thus $H$ is balanced. 

On the other hand, if $H$ is balanced, then for any proper subgraph $F$
of some $H'=H\setminus e$ with $v_F\ge3$, 
$(e_F-1)/(v_F-2)\le\lambda=(e_{H'}-1)/(v_{H'}-2)$.
Thus $H'$ is 2-balanced.  
\end{proof}

\begin{lemma}\label{L_con}
For any graph $H$, if $H$ is balanced then it is connected. 
\end{lemma}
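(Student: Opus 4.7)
The plan is to argue by contradiction: assume $H$ is balanced but disconnected, and exhibit a proper subgraph $F$ with $v_F\ge 3$ violating the balanced inequality. Since the paper assumes $\delta_H\ge 2$, every component of $H$ has at least $3$ vertices (a graph on $\le 2$ vertices cannot have minimum degree $2$). In particular, if $H$ is disconnected we can split its vertex set into two vertex-disjoint induced subgraphs $F$ and $F'$, each a union of whole components, each with $v_F, v_{F'}\ge 3$, and each a proper subgraph of $H$. Crucially, because there are no edges between distinct components, we have the exact identities $v_F+v_{F'}=v_H$ and $e_F+e_{F'}=e_H$.

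The heart of the argument is a weighted-average calculation. I would compute
\[
(v_F-2)\cdot\frac{e_F-1}{v_F-2}+(v_{F'}-2)\cdot\frac{e_{F'}-1}{v_{F'}-2}=e_H-2,
\]
with total weight $(v_F-2)+(v_{F'}-2)=v_H-4$. Hence the weighted mean of $(e_F-1)/(v_F-2)$ and $(e_{F'}-1)/(v_{F'}-2)$ equals $(e_H-2)/(v_H-4)$. Since $v_H\ge 4$ and $e_H\ge v_H\ge 4$ (so $e_H-2>0$), we have the strict inequality
\[
\frac{e_H-2}{v_H-4}>\frac{e_H-2}{v_H-2}=\lambda.
\]
Therefore the larger of the two ratios also strictly exceeds $\lambda$, which contradicts the balanced hypothesis applied to whichever of $F,F'$ achieves this maximum.

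The only mild care needed is in checking hypotheses: that both $F$ and $F'$ are proper subgraphs with $v_F,v_{F'}\ge 3$ (handled by the minimum degree assumption applied to each component), and that $v_H-4>0$ so that the weighted mean is well-defined and strictly greater than $\lambda$ (handled by $v_H\ge 4$, with the boundary case $v_H=4$ ruled out because then we could not split $H$ into two components of size $\ge 3$ each). There is no real obstacle; the step that requires the most attention is simply ensuring these threshold conditions are met so that the weighted-average comparison can be invoked.
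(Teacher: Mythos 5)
Your proof is correct and uses essentially the same argument as the paper: the paper also takes a weighted average of $(e_i-1)/(v_i-2)$ with weights $v_i-2$ over the two sides of a partition and compares with $\lambda$, deducing that a crossing edge must exist. You specialize to the partition into unions of whole components (which, via $\delta_H\ge2$, cleanly guarantees $v_F,v_{F'}\ge3$ and $v_H\ge6$) and run it as a contradiction, but the key computation is identical.
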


\begin{proof}
Assuming that $H$ is balanced, we show that 
there is at least one edge between any two 
non-empty sets $V_1,V_2$ that partition the 
vertex set of $H$. 
Let $v_i$ and $e_i$ be the number of vertices and edges, respectively,
in the subgraph of $H$ induced by $V_i$, 
and $e_{12}$ the number of edges in $H$
between $V_1$ and $V_2$, so that $e_H=e_1+e_2+e_{12}$. 
If either $v_i\le 2$ 
or $e_1+e_2\le 3$ 
then $e_{12}\ge1$, since $\delta_H\ge2$ and $e_H\ge v_H\ge4$. 
Hence assume that both $v_i\ge3$ and $e_1+e_2\ge4$. 
Then both 
\[
\frac{e_i-1}{v_i-2}\le\lambda=\frac{e_1+e_2+e_{12}-2}{v_1+v_2-2}. 
\]
Taking a weighted average, with weights $v_i-2$, 
it follows that 
\[
\frac{e_1+e_2-2}{v_1+v_2-4}
\le 
\lambda
\]
and so 
\[
e_{12}\ge \left(\frac{v_1+v_2-2}{v_1+v_2-4}-1\right)(e_1+e_2-2)>0.\qedhere
\]
\end{proof}

\providecommand{\bysame}{\leavevmode\hbox to3em{\hrulefill}\thinspace}
\providecommand{\MR}{\relax\ifhmode\unskip\space\fi MR }
\providecommand{\MRhref}[2]{%
  \href{http://www.ams.org/mathscinet-getitem?mr=#1}{#2}
}
\providecommand{\href}[2]{#2}

\end{document}